\newtheorem{remark}{Remark}
\newtheorem{definition}{Definition}
\newtheorem{lemma}{Lemma}
\newtheorem{example}{Example}
\newtheorem{theorem}{Theorem}
\newtheorem{assumption}{Assumption}
\newcommand{\e}{\epsilon}
\newcommand{\Om}{\Omega}
\newcommand{\Oe}{\Omega_\epsilon}
\newcommand{\G}{\Gamma}
\newcommand{\bs}{\boldsymbol}
\newcommand{\f}{\frac}
\newcommand{\vertiii}[1]{{\left\vert\kern-0.25ex\left\vert\kern-0.25ex\left\vert #1
    \right\vert\kern-0.25ex\right\vert\kern-0.25ex\right\vert}}
\newcommand{\be}{\begin{equation}}
\newcommand{\eb}{\end{equation}}
\newcommand{\p}{\partial}
\newcommand{\IH}{\mathcal{I}_H}
\newcommand{\bt}{\bs\tau}
\newcommand{\T}{\mathcal{T}_H}
\newcommand{\ring}{\mathcal{R}}
\newcommand{\emp}{\emptyset}
\newcommand{\co}{\phi}
\newcommand{\cb}{\lambda} % coarse basis
\newcommand{\mr}{\mathrm}
\newcommand{\UU}{U}
\newcommand{\supp}{\operatorname{supp}}
\newcommand{\rev}[1]{{\color{black}#1}}
\newcommand{\revv}[1]{{\color{black}#1}}
\begin{document}

%%-----------------------------
%%      the top matter
%%-----------------------------
\title{Numerical upscaling for heterogeneous materials in fractured domains}% At most 5 thanks
\author{Fredrik Hellman\thanks{Department of Mathematical Sciences, Chalmers University of Technology and University of Gothenburg, 412 96 Gothenburg, Sweden.}\and
Axel M\aa lqvist$^*$\and
Siyang Wang\thanks{\Letter\ Corresponding author: siyang.wang@mdh.se \newline
\hspace*{0.5cm}   Division of Applied Mathematics, UKK, M\"{a}lardalen University, 721 23 V\"{a}ster\aa s, Sweden. \newline
\hspace*{0.5cm} The first and second authors were supported by the Swedish Research Council and the G\"{o}ran Gustafsson foundation for Research in Natural Sciences and Medicine.}  }

\date{\today}
\maketitle
\begin{abstract} 
We consider numerical solution of elliptic problems with heterogeneous diffusion coefficients containing thin highly conductive structures. Such problems arise e.g.~in fractured porous media, reinforced materials, and electric circuits. The main computational challenge is the high resolution needed to resolve the data variation. We propose a multiscale method that models the thin structures as interfaces and incorporate heterogeneities in corrected shape functions. The construction results in an accurate upscaled representation of the system that can be used to solve for several forcing functions or to simulate evolution problems in an efficient way. By introducing a novel interpolation operator, defining the fine scale of the problem, we prove exponential decay of the shape functions which allows for a sparse approximation of the upscaled representation.  An a priori error bound is also derived for the proposed method together with numerical examples that verify the theoretical findings. Finally we present a numerical example to show how the technique can be applied to evolution problems.
\end{abstract}
%
%\begin{resume} ... \end{resume}
%
%\subjclass{35J15, 65N12, 65N15, 65N30}
%
%{Generalized finite element method; Localized orthogonal decomposition; Porous media; Fracture; Darcy flow}
%

%%-----------------------------
%%      your text
%%-----------------------------

\section{Introduction}

A major challenge when solving elliptic partial differential equations with rapidly varying coefficients is to handle thin highly permeable structures. These structures appear e.g.~as fractures in  porous materials, as reinforcements in composite materials, or as conducting parts in electric components. Even without the thin structures we know from homogenization theory that the heterogeneous diffusion need to be well resolved globally. Highly conductive thin structures lead to the additional complication of global couplings on a finer scale that are not seen on coarse discretization levels. This poses problems both for iterative methods like multigrid, that takes advantage of multiple levels of discretization, and for upscaling or multiscale methods where a coarse and sparse representation is sought.

Several multiscale methods, addressing the issue of rapidly varying data, have been developed during the last twenty years, see e.g.~ \cite{Hou1997,Hughes1998} and more recently \cite{Malqvist2014,Owhadi2014}. In this work we use the localized orthogonal decomposition method (LOD) from \cite{Malqvist2014}. See also \cite{Engwer2019} for a detailed description of the implementation. In this method, the solution space is split into a fine scale part, defined as the kernel of an interpolation operator, and its orthogonal complement, defining the multiscale space. The multiscale solution is given as a Galerkin approximation of the weak form in the multiscale space. The method is proven to give optimal convergence rate in the absence of high contrast diffusion. In the recent work \cite{Kornhuber2018} a domain decomposition algorithm was proposed which is related to \cite{Malqvist2014} and gives an alternative iterative approach to upscaling. The methods mentioned so far cannot be proven to converge if the diffusion coefficient has thin highly permeable structures. The high contrast diffusion problem was studied in two recent works \cite{Hellman2017,Peterseim2016} using diffusion weighted interpolation to define the fine scales in a way that allows sparse but still accurate coarse scale representations. Still the issue of resolving the thin structures locally remains.

A common strategy to represent thin structures is to use interface models that \revv{give} asymptotically correct representation as the width goes to zero. In \cite{Alboin2002} an  asymptotic model for the case with very high fracture permeability in Darcy flow is derived. The model is extended to handle both very high and very low fracture permeabilities in \cite{Martin2005}. Well-posedness of the asymptotic model is proved, and the error between the asymptotic model and the original model is analyzed. In \cite{Angot2009,Dangelo2012}, an asymptotic model is developed for the case when the fractures are fully immersed in the porous media. A similar approach is taken in \cite{Capatina2016} with a focus on the high fracture permeability case.

In this paper, we apply the localized orthogonal decomposition technique to a model problem with rapidly varying diffusion and interfaces. Under approximation and stability assumptions on the interpolation operator defining the fine scales, we prove exponential decay of the corresponding multiscale correctors, also at the interfaces, and thereby optimal convergence of the full proposed method. We propose a Scott--Zhang type
interpolation operator that fulfills the assumptions when the fracture is a union of coarse element edges. 
The construction is related to the diffusion dependent interpolation operator proposed in \cite{Hellman2017}. \revv{When the fracture cuts through coarse elements, for the nodal variables close to the interface we determine the integration domain by a computable indicator.} This method gives an accurate and sparse coarse scale representation of the problem that can be reused when solving for different right hand sides or time dependent problems. For the fine scale discretization we use the simple finite element method proposed in \cite{Burman2019}.

The outline of the paper is as follows. In Section~\ref{sec:model}, we present the model problem. We then introduce the LOD method in Section~\ref{sec_MM} and construct interpolation operators in Section~\ref{sec:interpolation}. In Section~\ref{sec:decay} we prove exponential decay for the corrected shape functions and an a priori error bound for the proposed method.
Numerical experiments are presented in Section~\ref{sec:num} to verify the theoretical analysis, and demonstrate the effectiveness of the proposed method.

\section{Model problem}
\label{sec:model}
%{\color{blue}
%\begin{itemize}
%\item Thin channel problem with three subdomains in the unit square, interface conditions between subdomains
%\item Integrate across the thin channel, assume $A_\e\sim \e^{-1}$ and take the limit $\e\rightarrow 0$ to derive the model equation.
%\item Weak formulation: find $u\in V=H^1_0(\Omega)\cap H^1(\Gamma)$ such that ...
%\end{itemize}
%}

Let $\Om$ be a polygonal domain in $\mathbb{R}^2$. We assume that the fracture $\Oe\in\Om$  separates $\Om$ to two subdomains $\Om_1$, $\Om_2$ such that
\[\Om = \Om_1 \cup \Om_2 \cup \Oe,\quad \Om_1 \cap \Om_2 = \emp,\]
with two interfaces
\[\Gamma_1=\Om_1\cap\Oe,\quad \Gamma_2=\Om_2\cap\Oe.\]
Further, we assume that there exists a \revv{smooth} curve $\G$ such that the fracture $\Oe$ can be parametrized as
\[ \Oe = \left\{\bs z\in \Om \,|\, \bs z = \bs x+c \bs n(\bs x),\quad \bs x\in \G \text{ and } c\in \left[-\f{\e}{2},\f{\e}{2}\right] \right\},\]
where $\bs n(\bs x)$ is the unit normal vector to the interface $\G$ at $\bs x$. The normal vector $\bs n(\bs x)$ varies along the interface $\G$, but the distances from $\bs x$ to $\G_1$ and $\G_2$ are equal. \revv{}. The small constant $\e$ represents the width of the fracture. For a simplified notation, we write $\bs n$ to denote the unit normal vector.%, and define $\bs\tau$ as the unit  tangential vector such that $\bs n\cdot\bs\tau=0$.

%\textcolor{red}{When $d=3$, I am thinking $\tau$ is not
%  well-defined. Is it ok to use $\nabla$ without $\tau$ for the
%  gradients in dimension $d-1$? If that works, it would be convenient,
%  because then we maybe can use the notation $H^1(\Gamma)$ etc. Since
%  one does that with boundaries, I guess it might work for our
%  fractures too?}

\revv{We consider a sinlge incompressible flow described by mass conservation and Darcy's law in both the bulk domain and the fracture. The permeability $A_{1,2}$ in the bulk domain oscillates rapidly and the fracture width $\e$ is on an even smaller scale than the ocsillation period of $A_{1,2}$.} 
The pressure field of the Darcy flow can be written as
\begin{equation}\label{poisson_eqn}
-\nabla\cdot A_i\nabla u_i = f_i, \quad \text{ in } \Om_i, \ i = 1,2,\e.
\end{equation}
We consider homogeneous Dirichlet boundary condition,
\be\label{BC}
u_i = 0, \quad \text{ on } \p\Om, \ i = 1,2,\e.
\eb
At the interfaces $\G_1$ and $\G_2$, we impose continuity of pressure and continuity of flux in the normal direction,
\begin{align}\label{int_cond}
u_i = u_\e,\quad A_i\nabla u_i \cdot \bs n_i = A_\e\nabla u_\e\cdot \bs n_i,\quad \text{ on } \G_i,\ i=1,2,
%[\![ u ]\!] &= 0, \quad \G_1\cup\G_2 \\
%[\![ A\nabla u \cdot \bs n]\!] &= 0, \quad \G_1\cup\G_2
\end{align}
where $\bs n_i$ is the outward unit normal vector of $\Omega_i$ on $\G_i$.
The problem \eqref{poisson_eqn}-\eqref{int_cond} is well posed.

\begin{figure}
\centering
\includegraphics[width=0.8\textwidth]{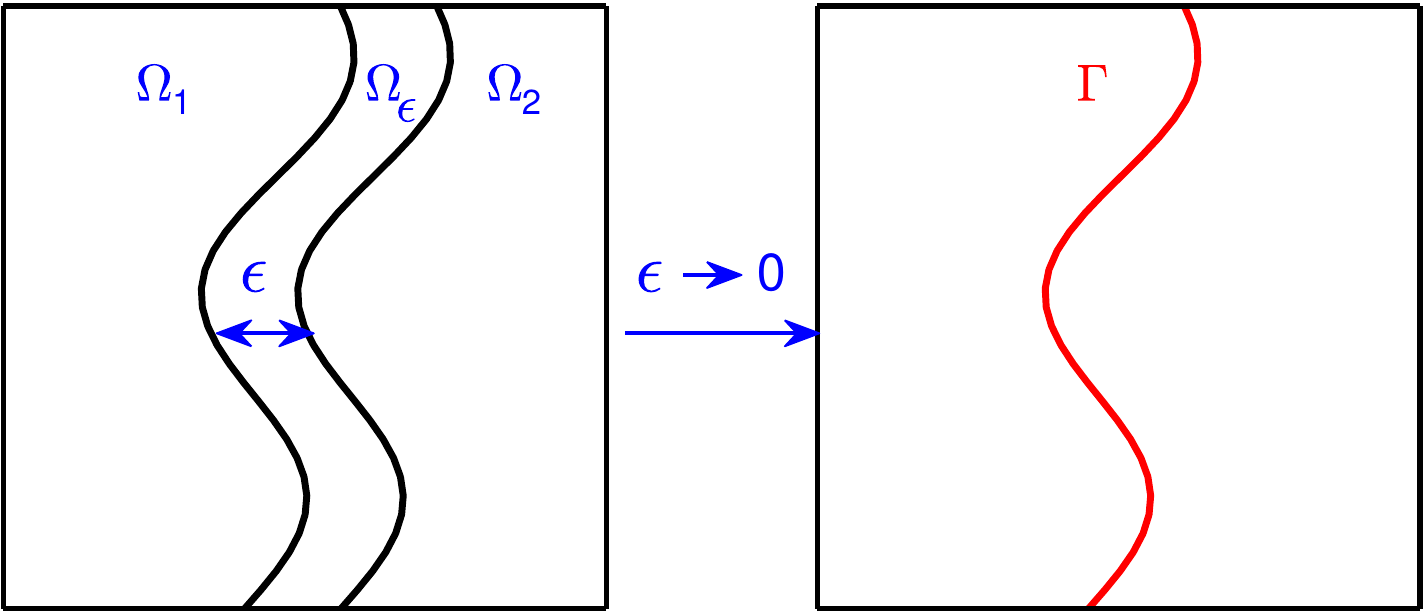}
\caption{Modeling the fracture domain $\Oe$ as an interface $\Gamma$.}
\label{Channel_Interface}
\end{figure}

In an asymptotic model, the fracture $\Oe$ is modeled by an interface $\G$ as illustrated in Figure \ref{Channel_Interface}. The new equation on $\G$ and interface coupling conditions are obtained by averaging \eqref{poisson_eqn} in $\Om_\e$. Examples of asymptotic models can be found in \cite{Alboin2002,Angot2009,Burman2019,Burman2019b,Capatina2016,Dangelo2012}. We refer to \cite{Martin2005} for a more detailed discussion and error analysis of asymptotic models.

When the permeability $A_\e\sim\mathcal{O}(\e^{-1})$ is large, the asymptotic model problem can be stated as
\be\label{asymptotic_eqn}
\begin{array}{rll}
-\nabla\cdot A_i\nabla u_i &= f_i, & \quad \text{ in } \Om_i, \ i = 1,2, \\
-\nabla_{\bs\tau}\cdot A_\G \nabla_{\bs\tau} u_\G &= f_\G - [\![ A\nabla u\cdot\bs n]\!], & \quad \text{ on } \G, \\
u_1 &= u_2, &\quad \text{ on } \G, \\
u_i &= 0, &\quad \text{ on } \p\Om,\ i = 1,2,\G,
\end{array}
\eb
\rev{where the permeability on $\G$ is $A_\G=\e A_\e\sim\mathcal{O}(1)$. We also have $f_\e\sim\mathcal{O}(\e^{-1})$, thus $f_\G\sim\mathcal{O}(1)$. The pressure field is continuous across the interface and we have $u_\G=u_1=u_2$ on $\G$. } The jump term, defined as
$
\ [\![ A\nabla u\cdot\bs n]\!] = -A_2 \nabla u_2 \cdot\bs n|_\G+A_1 \nabla u_1 \cdot\bs n|_\G,
$
takes the flow interaction between the bulk domain and the interface into account.
%The symbol $\nabla_{\bs \tau} u_\G = (\nabla u_\G \cdot \bs \tau)\bs \tau$ denotes gradient in the tangential direction. The accuracy of the model is of order $\epsilon$.
The symbol $\nabla_{\bs \tau}$ and $\nabla_{\bs \tau}\cdot$ denote tangential gradient and tangential divergence, respectively. The accuracy of the model is of order $\epsilon$.

We assume that the permeability parameters satisfy
\begin{equation}\label{ab}
0<\alpha=\text{ess inf } A_i\leq \text{ess sup } A_i=:\beta<\infty,\ i=1,2,\G,
\end{equation}
for some constants $\alpha$ and $\beta$. In particular, we consider permeabilities $A_1$ and $A_2$ that are highly oscillatory. The magnitudes of all permeability parameters $A_1$, $A_2$, and $A_\G$ are  on the same scale $\mathcal{O}(1)$.

\begin{remark}
An asymptotic model can be derived in the same way for problems in three space dimensions when the fractures are thin planes. %In the following analysis of numerical methods, we consider
%$d$-dimensional problems with $d=2$ or 3, and denote by $x_i,\ i=1,\cdots,d$ the spatial coordinates.
\end{remark}

\subsection{Weak formulation}\label{sec_weak_form}
%We let $\|v\|_\omega$ denote the $L_2$-norm of $v$ over a domain
%$\omega$ and $\|v\|$ the $L_2$-norm of $v$ over $\Om$. Further, we
%define $\|v\|_{H^0(\omega)} = \|v\|_\omega$ and
%$\|v\|^2_{H^1(\omega)} = \|v\|^2_\omega + \|\nabla v\|^2_\omega$. We also use the
%semi-norm notations $|v|_{H^0(\omega)} = \|v\|_\omega$ and $|v|_{H^1(\omega)} = \|\nabla v\|_\omega$.

Let $H^m(\omega)$ denote the Sobolev space of functions with weak derivatives of order $m$ bounded in $L^2$-norm over a domain $\omega$, and let $H^1_0(\omega)$ denote the space of functions in $H^1(\omega)$ that vanish on $\partial\omega$ in the sense of traces.  We also define the space $V=H_0^1(\Om)\cap H^1(\G)$. 
The $L^2$ inner product over the domain $\omega$ is denoted by $(\cdot,\cdot)_\omega$. The corresponding $L^2$-norm of a function $v$ is $\|v\|_\omega$. %We also use the
%semi-norm notations $|v|_{H^0(\omega)} = \|v\|_\omega$ and $|v|_{H^1(\omega)} = \|\nabla v\|_\omega$.

% The $L_2(\Om)$ inner product is denoted by $(\cdot,\cdot)_\Omega$ and the $L_2(\G)$ inner product is denoted by $(\cdot,\cdot)_\Gamma$.  We also define the space $V=H_0^1(\Om)\cap H^1(\G)$.

To derive a weak formulation, we multiply the first equation of \eqref{asymptotic_eqn} by a test function $v\in V$. Applying  Green's first identity in $\Om_1$ and $\Om_2$, and using the homogeneous Dirichlet boundary condition, we obtain
\begin{align*}
(f_1,v)_{\Om_1}+(f_2,v)_{\Om_2} &= -(\nabla\cdot A_1\nabla u_1,v)_{\Om_1}-(\nabla\cdot A_2\nabla u_2,v)_{\Om_2} \\
&= (A_1\nabla u_1,\nabla v)_{\Om_1}+(A_2\nabla u_2,\nabla v)_{\Om_2}- ([\![ A\nabla u\cdot\bs n]\!],v)_\G.
\end{align*}
The second equation of \eqref{asymptotic_eqn} leads to
\begin{align*}
(f_1,v)_{\Om_1}&+(f_2,v)_{\Om_2} \\
=&  (A_1\nabla u_1,\nabla v)_{\Om_1}+(A_2\nabla u_2,\nabla v)_{\Om_2}-(f_\G+\nabla_{\bs\tau}\cdot A_\G \nabla_{\bs\tau} u_\G,v)_{\G}.
\end{align*}
We then apply Green's first identity on $\G$, and obtain
\begin{align*}
(f_1,v)_{\Om_1}&+(f_2,v)_{\Om_2}+(f_\G,v)_{\G} \\
=&  (A_1\nabla u_1,\nabla v)_{\Om_1}+(A_2\nabla u_2,\nabla v)_{\Om_2}+( A_\G \nabla_{\bs\tau} u_\G,\nabla_{\bs\tau} v)_{\G}.
\end{align*}
After merging the integration in $\Om_1$ and $\Om_2$, we obtain the weak form: find $u\in V$ such that
\be\label{weak_form}
a(u,v) = F(v), \quad \forall v\in V,
\eb
where
\begin{align}
a(u,v) &= (A\nabla u,\nabla v)_{\Om}+( A_\G \nabla_{\bs\tau} u, \nabla_{\bs\tau} v)_{\G}, \label{scalar_a}\\
F(v) &= (f,v)_{\Om}+(f_\G,v)_{\G}. \label{Fv}
\end{align}

In \eqref{weak_form}-\eqref{Fv}, we do not distinguish notations for $u$ in $\Omega$ and $\G$. This is appropriate since $u$ is continuous across the interface. The bilinear form $a(\cdot, \cdot)$ is an inner product in the Hilbert space $V$ with an induced energy norm $\vertiii{v}=a(v,v)$, and $a(\cdot, \cdot)$ is bounded and coercive. It follows from the Lax--Milgram theorem that there exists a unique solution to the weak form \eqref{weak_form}.

\subsection{Intersected and immersed interfaces}\label{sec_immersed}
The weak form \eqref{weak_form} can be generalized to the case where multiple interfaces intersect with each other, and  to interfaces that are immersed in the domain. An example is depicted in Figure \ref{Intersected}, where all three interfaces are intersected at one point, and $\G_1$ is immersed.
\begin{figure}
\begin{center}
\includegraphics[height=0.6\textwidth]{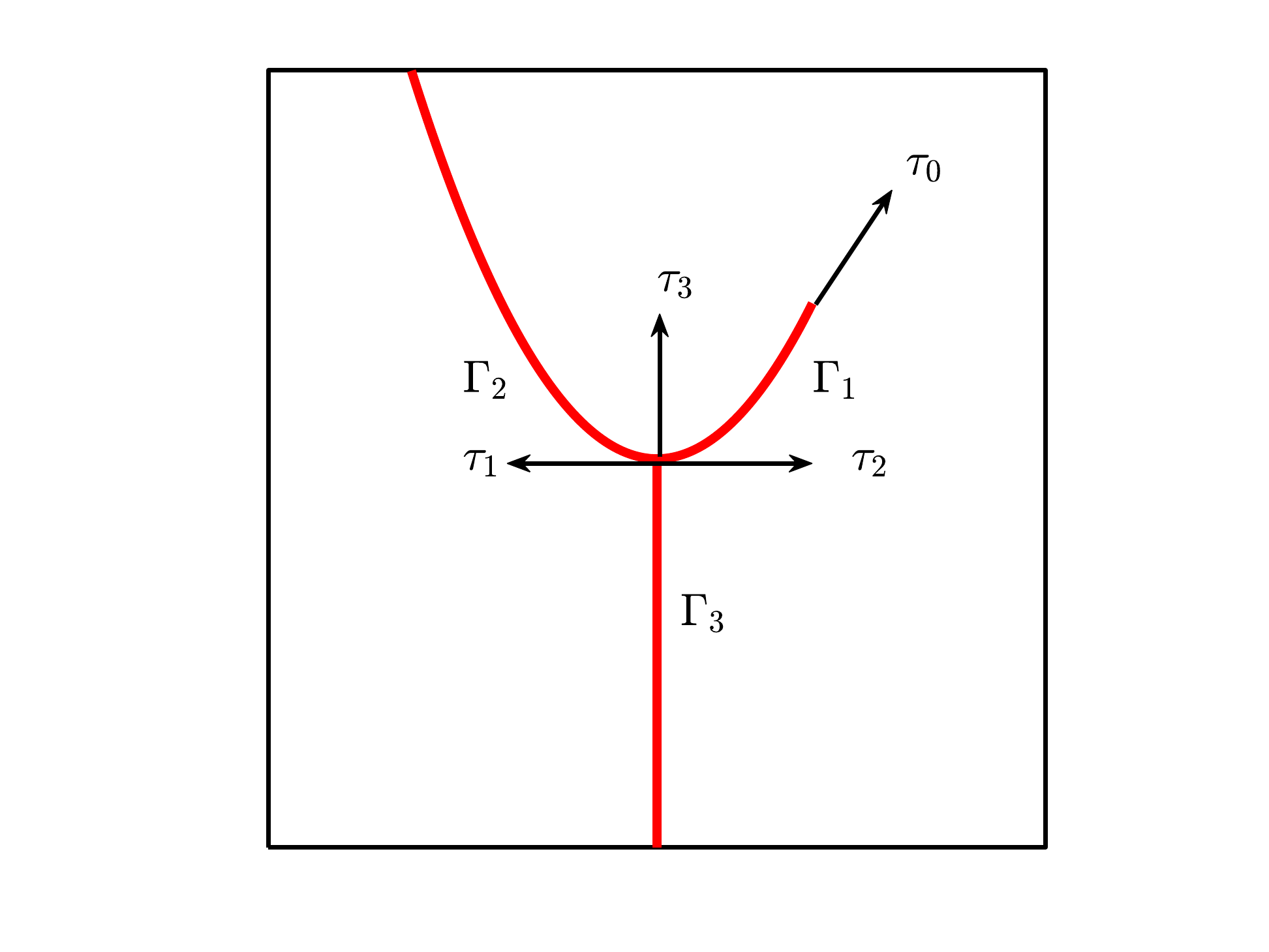}
\end{center}
\caption{Intersected and immersed interfaces.}
\label{Intersected}
\end{figure}
\rev{To model the intersection, we augument the strong form \eqref{asymptotic_eqn} by imposing the Kirchhoff condition}
\[
\sum_{i=1}^3 A_{\G_i} \nabla_{\bs\tau_i} u\cdot\bs\tau_i = 0\quad \text{ at } \Gamma_1\cap\Gamma_2\cap\Gamma_3,
\]
where $\bs\tau_i$ is the outward pointing unit tangential vector of $\G_i$ at the intersection. The Kirchhoff condition is imposed in the same way when more interfaces intersect \cite{Burman2019}.

For the immersed interface  $\Gamma_1$, at the immersed end of $\Gamma_1$, a homogeneous Neumann boundary condition is applied \cite{Angot2009}
\[
\nabla_{\bs\tau_0}u\cdot\bs\tau_0= 0,
\]
where $\bs\tau_0$ is the outward pointing unit tangential vector of $\G_1$ at the immersed end.

Following the derivation in Section \ref{sec_weak_form}, the weak form of the problem with interfaces $\Gamma_1$, $\Gamma_2$ and $\Gamma_3$ takes exactly the same form as \eqref{weak_form}-\eqref{Fv} with $\Gamma=\Gamma_1\cup\Gamma_2\cup\Gamma_3$,
\rev{but with the function space $V = \{ v \in H^1_0(\Omega)\cap H^1(\Gamma_1)\cap H^1(\Gamma_2)\cap H^1(\Gamma_3)\,:\, v \text{ is continuous in } \Gamma\}$. This choice of function space in combination with the Kirchhoff condition and the homogeneous Neumann boundary condition makes the boundary terms vanish when applying Green's first identity. See \cite{Burman2019} for a more general and detailed derivation.}

%\begin{center}
%\begin{tabular}{m{4.5cm}m{1cm}m{4.5cm}}
%\includegraphics[height=0.3\textwidth]{Thin_channel.eps} & $\xrightarrow[\text{}]{\varepsilon\rightarrow 0}$ &
%\includegraphics[height=0.3\textwidth]{Interface.eps}
%\end{tabular}
%\end{center}

%\begin{minipage}{0.4\textwidth}
%\includegraphics[height=0.9\textwidth]{Thin_channel.eps}
%\end{minipage}
%\begin{minipage}{0.2\textwidth}
%\vspace{-0cm}
%
%\quad\quad $\xrightarrow[\text{}]{\varepsilon\rightarrow 0}$
%
%\end{minipage}
%\begin{minipage}{0.4\textwidth}
%\includegraphics[height=0.9\textwidth]{Interface.eps}
%\end{minipage}

\section{The multiscale method}\label{sec_MM}
%In \cite{Malqvist2014}, the LOD method is used to solve problem \eqref{weak_form} with a highly oscillatory permeability $A$ without interfaces, i.e. $A_\G=f_\G=0$. When interfaces are present, a direct application of the LOD technique
%
%
%In this section, we first briefly review the LOD method. We then introduce a special type of Scott--Zhang interpolation operator for the asymptotic problem \eqref{weak_form}.  We prove the exponential decay of the LOD correctors, followed by a truncation error analysis and a priori error estimate.
%

In this section, we construct the LOD method for problem \eqref{weak_form}. To start, we consider a coarse scale finite element discretization. Let $\mathcal{T}_H$ be a quasi uniform conforming triangulation of $\Om$ consisting of closed and shape regular elements with mesh size parameter $H$. We assume there is a constant $\gamma$ such that
\begin{equation}\label{gamma}
\max_{T\in\T}\f{H}{d_T}\leq\gamma \text{\quad   and  \quad } \max_{T,T'\in\T}\f{d_{T'}}{d_T}\leq\gamma,
\end{equation}
where $d_T$ is the diameter of the inscribed circle in element $T$. %We do not require the mesh $\mathcal{T}_H$ to be aligned with the interface $\G$.
%\textcolor{red}{Should we omit the shape regular constant $\gamma$? If not, perhaps we should add the quasi-uniformity constant as well.}
  
\subsection{Orthogonal decomposition}

Let $V_H$ be a standard finite element space with continuous piecewise linear polynomials on $\mathcal{T}_H$ that satisfies the homogeneous Dirichlet boundary condition. The rapidly varying permeability $A$ need not to be resolved in the coarse space $V_H$.  %We denote by $\{\Phi_i\}$ the nodal basis for $V_H$, where $i=1,2,\cdots,n_H$. The rapidly varying permeability $A$ need not to be resolved in the coarse space $V_H$.  %When there is no ambiguity, we omit writing out the range of $i$ explicitly.

The full space $V$ and the coarse space $V_H$ are linked by an interpolation operator $\IH: V\rightarrow V_H$. It defines a fine space $V_\mr{f}$ as its kernel,
\be\label{Vf}
V_\mr{f} = \{v\in V: \IH (v) = 0\}.
\eb
We return to the exact assumptions needed on the interpolation operator $\IH$ and give examples in Section~\ref{sec:interpolation}.
The fine space contains fine scale features not resolved in $V_H$, and will be used to construct a multiscale space by using  correctors for the coarse basis functions spanning $V_H$. The correctors are defined as follows.

\begin{definition}\label{def_ec} % element corrector
For a given coarse function $v\in V_H$, the corrector {\color{black}$\co_{T}\in V_\mr{f}$} for $T\in\mathcal{T}_H$ is the solution to
\be\label{Qdef}
a(\co_{T},w)=a_T(v,w),\quad \forall w\in V_\mr{f},
\eb
where
\[
a_T(v,w)= \int_T A \nabla v\cdot \nabla w + \int_{\G_T} A_{\G} \nabla_{\bt} v \cdot \nabla_{\bt} w,
 \]
and $\G_T=\G\cap T$.
\end{definition}

We use a correction operator $Q$, which is defined as
\begin{equation}\label{Qv}
Qv=\sum_{T\in\T}  \co_{T},
\end{equation}
to construct the multiscale space

\begin{equation}\label{def_Vms}
V_\mr{ms}=\{Qv-v: v\in V_H\}.
\end{equation}
We note that $V_\mr{ms}$ is orthogonal to the fine space $V_\mr{f}$ in the $a$-scalar product, and the dimension of $V_\mr{ms}$ is the same as the dimension of $V_H$. Using the low dimensional space $V_\mr{ms}$, the multiscale Galerkin approximation reads: find $u_\mr{ms}\in V_\mr{ms}$ such that
\be\label{weak_form_ms}
a(u_\mr{ms},v)=F(v),\quad \forall v\in V_\mr{ms},
\eb
where $a(\cdot,\cdot)$ and $F(\cdot)$ are defined in \eqref{scalar_a} and \eqref{Fv}, respectively.
The Galerkin orthogonality then follows
\be\label{GalerkinOrthogonality}
a(u-u_\mr{ms},v)=0,\quad \forall v\in V_\mr{ms}.
\eb
Since the error $u-u_\mr{ms}\in V_\mr{f}$, it satisfies
\be\label{IH0}
\IH (u-u_\mr{ms})=0.
\eb
%In addition,  a bound for the error $u-u_{ms}$ is given in the following lemma.
%
%\begin{lemma}\label{ums_error}
%The following a priori error estimate holds
%\be\label{apriori}
%\vertiii{u-u_{ms}} \leq C_{A,A_{\G},\beta}H (\|f\|_{\Om}+\|f_{\G}\|_\G),
%\eb
%if the interpolation operator $\IH$ fulfills in interpolation bound
%$$
%\|v-\IH v\|_\Omega+\|v-\IH v\|_\Gamma\leq CH\big(\|\nabla v\|_\Omega+\|\nabla_{\bs{\tau}}v\|_\Gamma\big)
%$$
%for all $v\in V$.
%\end{lemma}
%
%\begin{proof}
%We denote the error $e=u-u_{ms}\in V_f$. By using the Galerkin orthogonality \eqref{GalerkinOrthogonality} and the weak formulation \eqref{weak_form}, we have,
%\[
%\vertiii{e}^2 = a(e, e) = a(e,u) =F(e).
%\]
%Next,  the relation \eqref{IH0} yields
%\[
%\begin{split}
%\vertiii{e}^2  &= (f,e)_\Om + (f_\G,e)_\G \\
%&= (f,e-\IH (e))_\Om + (f_\G,e-\IH (e))_\G \\
%&\leq \|f\|_\Om \|e-\IH (e)\|_\Om+ \|f_\G\|_\G \|e-\IH (e)\|_\G.
%\end{split}
%\]
%The interpolation error estimate  \eqref{IH_error} and  \eqref{IH_error_G}
%%\begin{align*}
%%\|e-\IH (e)\|_\Om &\leq C_{\beta}H\|\nabla e\|_\Om, \\
%%\|e-\IH (e)\|_\G &\leq C_{\beta,\G}H\|\nabla_{\bs\tau} e\|_\G
%%\end{align*}
%lead to
%\[
%\begin{split}
%\vertiii{e}^2 &\leq C_{\beta}H(\|f\|_\Om\|\nabla e\|_\Om+\|f_\G\|_\G\|\nabla_{\bs\tau} e\|_\G) \\
%&\leq C_{\beta} H(\|f\|_\Om+\|f_\G\|_\G)(\|\nabla e\|_\Om+\|\nabla_{\bs\tau} e\|_\G)\\
%&\leq C_{A,A_{\G},\beta} H(\|f\|_\Om+\|f_\G\|_\G)\vertiii{e},
%\end{split}
%\]
%which completes the proof.
%\end{proof}

\subsection{Localization}

To construct the multiscale space $V_\mr{ms}$, we need to solve \eqref{Qdef} for correctors $\co_{T}$ for every $T\in \T$. Since $\co_{T}\in V_\mr{f}$ in general has global support,  each solve is computationally as expensive as solving the original problem on a fine mesh that resolves all fine features. For problems without fractures and high contrast data, corresponding to $A_\G=f_\G=0$ in \eqref{weak_form}, it is proved in \cite{Malqvist2014} that $\co_T$ in \eqref{Qdef} decays exponentially from its support. The fast decay allows for a localized computation when constructing a basis for $V_\mr{ms}$, which is the key to the efficiency of the LOD method. In the following, we investigate the decay property when $A_\G> 0$.

As an example, we consider the weak form \eqref{weak_form} when the domain $\Om$ is a unit square with an interface $\G$ at $x=0.5,\  0\leq y\leq 1$. The permeability on the interface is $A_\G=5$, and the permeability $A$ in the bulk domain is piecewise constant with respect to a uniform Cartesian grid of width $2^{-7}$. The values are sampled from the uniform distribution in [0.1,0.9]. A triangulation $\mathcal{T}_H$ of $\Om$ is constructed such that the interface $\G$ is a union of coarse element edges, but $A$ is not well-resolved by $\mathcal{T}_H$. The fine space $V_\mr{f}$ is defined as the kernel of the Scott--Zhang interpolation operator \cite{Scott1990}, whose nodal variables on each node of $\mathcal{T}_H$ are computed by averaging the function in neighboring elements or edges.

\begin{figure}
\centering
\subfloat[][element-based Scott--Zhang.]{\includegraphics[width=0.45\textwidth]{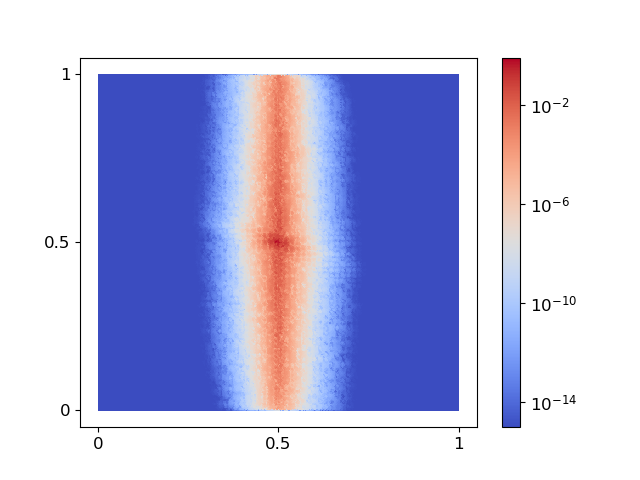}\label{Figure_corrector_log_eps}}
\subfloat[][edge-based Scott--Zhang.]{\includegraphics[width=0.45\textwidth]{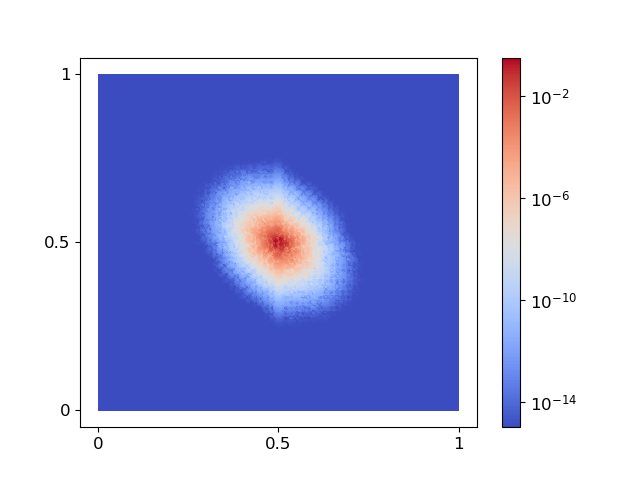}\label{Figure_corrector_log_edge_eps}}
\caption{}
\end{figure}

Let $\cb_m$ be the coarse basis function centered at $(0.5,0.5)$. First, we compute $Q\cb_m$  by  using the \revv{standard} element-based Scott--Zhang interpolation operator. In this case, \revv{the integration domains are all elements connected to the corresponding node. We observe in Figure~\ref{Figure_corrector_log_eps} that} $Q\cb_m$ decays slowly in the direction of the interface $\G$. If we instead use an edge-based Scott--Zhang interpolation operator, we see fast decay shown in Figure~\ref{Figure_corrector_log_edge_eps}. Here, 
 for the nodes on $\G$, we only select neighboring edges that are also on $\G$ \revv{as the integration domains}; for all the other nodes, we \revv{integration domains are} all neighboring elements. In Section~\ref{sec:interpolation}, we give a proof for this case to justify that the decay is exponential. It turns out to be crucial to let nodal variables on and close to the interface to integrate only on the interface. This is in agreement with the observations in \cite{Hellman2017}.

 The above observation motivates a localization of \eqref{Qdef} by
 restricting $\co_{T}$ to a patch.  For this we need the element
 neighbor operator $\UU(\omega)$ that maps the subdomain $\omega$ to a
 patch of elements that intersect with $\omega$ and define
\begin{equation}
  \label{NT}
  \UU(\omega) = \bigcup \{ T \in \mathcal{T}_H\,:\, T \cap \omega \neq \emp \}.
\end{equation}
When applied recursively, we get a multi-layer
element neighbor operator $\UU^{k}(\omega) =
\UU(\UU^{k-1}(\omega))$, \revv{where  $k$ is the patch size. We use the convention $\UU^{1}(\omega) :=\UU(\omega) $ and $\UU^{0}(\omega) :=\omega$,}. We also define $\UU_{\G}(T) = \UU(T)\cap\G$.

The restricted fine space $\rev{V_\mr{f}(\UU^k(T))} \subset V_\mr{f}$ is defined as
\[
%V_f^k = \{v\in H_0^1(\UU^k(T)) \cap H_0^1(\G_{\UU^k(T)})\,:\, \IH v=0\},
\rev{V_\mr{f}(\UU^k(T))} = \{v\in V\,:\, \IH v=0,\ \supp(v) \subset \UU^k(T) \}.
\]
We localize $\co_{T}$ by defining $\co_{T}^k\in \rev{V_\mr{f}(\UU^k(T))}$ that satisfies
\be\label{QeqnTk}
a_k(\co_{T}^k, w) = \int_T A \nabla v\cdot \nabla w + \int_{\G_T} A_{\G} \nabla_{\bt} v \cdot \nabla_{\bt} w,
\eb
for all $w\in \rev{V_\mr{f}(\UU^k(T))}$. The bilinear form $a_k(\cdot, \cdot)$ is defined on a patch of size $k$ as
\[
a_k(\co_{T}^k, w) =  (A\nabla\co_{T}^k, \nabla w)_{\UU^k(T)}+(A_{\G}\nabla_{\bt}\co_{T}^k, \nabla_{\bt}w)_{\G\cap{\UU^k(T)}},
\]
 and the values of $\co_{T}^k$ outside the patch is zero. We have the best approximation property
\be\label{BestAppLambda}
\vertiii{\co_{T}-\co_{T}^k}\leq \vertiii{\co_{T}-v},\quad \forall v\in \rev{V_\mr{f}(\UU^k(T))}.
\eb

%The dimension of $V_f^k$ is much lower than the dimension of $V_f$, making the localized computation \eqref{QeqnTk} cheap to carry out. In addition, \eqref{QeqnTk} can be solved on a parallel machine for every $T\in\T$ and $i=1,\cdots,d$.

%By extending the values of the localized element correctors $\co_{T,i}^k$ by zero to the domain outside patch $k$,

%The Galerkin orthogonality follows
%\[
%a(\co_{T,i}-\co_{T,i}^k,w) = 0\quad \forall w\in V_f^k.
%\]
%Consequently, we have the best approximation property
%\be\label{BestAppLambda}
%\vertiii{\co_{T,i}-\co_{T,i}^k}\leq \vertiii{\co_{T,i}-v}\quad \forall v\in V_f^k.
%\eb

We define the localized correction operator  $Q_k$ as
\begin{equation}\label{Qkv}
Q_k v = \sum_{T\in\T} \co_{T}^k,
\end{equation}
for all $v\in V_H$.
By applying the localized correction operator $Q_k$ to every basis function $\cb_i$ of $V_H$, we obtain $\{Q_k \cb_i-\cb_i\}$ as a basis for the localized multiscale space $V_\mr{ms}^k$.

\subsection{The localized orthogonal decomposition method}
Given the space $V_\mr{ms}^k$ we are ready to present the localized version of the method.
The LOD method then reads: find $u_\mr{ms}^k\in V_\mr{ms}^k$ such that
\be\label{lod_weak_form}
a(u_\mr{ms}^k, v) = F(v),\quad \forall v\in V_\mr{ms}^k.
\eb
Since $V_\mr{ms}^k\subset V$, we have the Galerkin orthogonality
\[
a(u-u_\mr{ms}^k,v) = 0,\quad \forall v\in V_\mr{ms}^k,
\]
and the best approximation property
\[
\vertiii{u-u_\mr{ms}^k}\leq \vertiii{u-v},\quad \forall v\in V_\mr{ms}^k.
\]
We have intentionally not discretized the \rev{restricted fine spaces $V_\mr{f}(\UU^k(T)), T \in \T$} at this stage to allow for different discretization methods to be used. In Section~\ref{sec:num} we present the particular choice made in our numerical experiments. We choose a simple finite element discretization resolving the interfaces and the rapidly varying diffusion, see \cite{Burman2019}. More sophisticated techniques as CutFEM \cite{Burman2019b} could be considered using a non-conforming LOD formulation similar to \cite{Elfverson2013b}.

\section{Interpolation operator}
\label{sec:interpolation}
Our proof for exponential decay of correctors requires the interpolation
operator to satisfy a stability bound and an error bound presented in
Assumption~\ref{assumptionIH} below.  Throughout the paper, we use $C$
to denote a constant in the error bound, and specify its dependence by
subscript. Two constants $C$ with the same subscript need not to be
equal.

%\begin{assumption}\label{assumption1}
%For any $v\in V$, the interpolation operator satisfies the error estimate
%\be\label{IH_error}
%\| v-\IH v \|_{T}  \leq C_{\beta} H \| \nabla v \|_{\UU(T)},
%\eb
%and the $H^1$ stability estimate
%\begin{align}
%\| \nabla \IH v \|_{T} & \leq C_{\beta}\| \nabla v \|_{\UU(T)}, \label{IH_stability_H1}
%\end{align}
%where $\UU(T)$ contains all elements that are node neighbours of $T$:
%\be\label{NT}
%\UU(T) = \{ K\in\mathcal{T}_H: K\cap T\neq\emp \}.
%\eb
%\end{assumption}
%
%
%\begin{assumption}\label{assumption2}
%For any $v\in V$, and any $T$ such that $\G_T:=T\cap\G$, the interpolation operator $\IH$ satisfies the error estimate
%\be\label{IH_error_G}
%\| v-\IH v \|_{\G_T}  \leq C_{\beta,\G} H \| \nabla_{\bt} v \|_{\UU_{\G}(T)},
%\eb
%and the $H^1$ stability estimate
%\begin{align}
%\| \nabla_{\bt} \IH v \|_{\G_T} & \leq C_{\beta,\G}\| \nabla_{\bt} v \|_{\UU_{\G}(T)}, \label{IH_stability_H1_G}
%\end{align}
%where $\UU_{\G}(T) = \UU(T)\cap\G$.
%\end{assumption}
%
%{\color{red}New assumptions}

\begin{assumption}\label{assumptionIH}
For any $v\in V$, the interpolation operator $\IH$ satisfies the error bound
\begin{equation}
  \label{IH_error}
  \| v-\IH v \|_{T} + \| v-\IH v \|_{\G_T} \leq C H (\| \nabla v \|_{\UU(T)}+\| \nabla_{\bt} v \|_{\UU_{\G}(T)}),
\end{equation}
and the $H^1$ stability bound
\begin{align}\label{IH_stability}
\| \nabla \IH v \|_{T} + \| \nabla_{\bt} \IH v \|_{\G_T}  & \leq C(\| \nabla v \|_{\UU(T)}+\| \nabla_{\bt} v \|_{\UU_{\G}(T)}),
\end{align}
where $C$ is independent of $H$.
\end{assumption}

We present a node averaging Scott--Zhang type interpolation operator
that integrates specifically over the fracture $\Gamma$ in order to
satisfy the assumption. \revv{In Lemma \ref{lemma_SZ}}, we prove that the interpolation operator
satisfies the assumption when $\Gamma$ is a union of the coarse element
edges. The operator can, however, be used also when $\Gamma$ is
arbitrarily shaped, but without guarantees on satisfying the
assumption. Both cases are studied numerically in
Section~\ref{sec:num}.

Following \cite{Scott1990}, for any free node $N$ and any
integration domain $\sigma \subset T$ with $T \in \mathcal{T}_H$, we define the $L^2(\sigma)$-dual
basis $\psi_{N,\sigma} \in V_H|_\sigma$ such that, for all fixed and free nodes $N'$, it holds
\begin{equation}\label{eq:psi}
  \int_{\sigma} \psi_{N,\sigma} \lambda_{N'} = \delta_{NN'},
\end{equation}
where $\delta_{\cdot\cdot}$ is the Kronecker delta and $\lambda_{N'}$ is the finite element basis function for node $N'$.
The integration domain $\sigma$ can be either $d$- or
$(d-1)$-dimensional. In the following, it will be either a
subset of $\Gamma$ or a coarse triangle.

The interpolation operator is defined by the choice of the sets (one
for each node)
$\mathcal{T}^{\Gamma}(N) \subset \mathcal{T}(N) := \{ T \in
\mathcal{T}_H\,:\, N \in T \}$. Examples of such sets will
be specified below. Based on the definition of
$\mathcal{T}^{\Gamma}(N)$, we let
\begin{equation*}
  \begin{aligned}
    \mathcal{N}^{\Gamma} & = \{ N \in \mathcal{N}\,:\,\mathcal{T}^{\Gamma}(N) \ne \emptyset \}, \\
    \mathcal{N}^{\Omega} & = \{ N \in \mathcal{N}\,:\,\mathcal{T}^{\Gamma}(N) = \emptyset \}, \\
  \end{aligned}
\end{equation*}
be the sets of nodes $N$ that do have, and do not have, respectively,
any triangles in their $\mathcal{T}^{\Gamma}(N)$.

The interpolation operator is defined as
\begin{equation}
  \label{SZ}
  \begin{aligned}
    \IH v &= \sum_{N \in {\mathcal{N}}^{\Gamma}} \frac{1}{\operatorname{card}(\mathcal{T}^{\Gamma}(N))} \sum_{T \in \mathcal{T}^{\Gamma}(N)} \left(\int_{\G_T} \psi_{N,\G_T}v \right) \lambda_N + {}\\
    &\phantom{={}} \sum_{N \in {\mathcal{N}}^{\Omega}} \frac{1}{\operatorname{card}(\mathcal{T}(N))} \sum_{T \in \mathcal{T}(N)} \left(\int_{T} \psi_{N,T}v \right) \lambda_N.
  \end{aligned}
\end{equation}
Below, when the node $N$ and the integration domain $\sigma$ are clear from
the context, we discard those subscripts and let
$\psi = \psi_{N,\sigma}$ and $\psi_i = \psi_{N_i,\sigma}$.

\subsection{The fracture is a union of edges}
First, we consider the case when the fracture $\G$ is a union of
edges. Let $\mathcal{E}^{\Gamma}_H$ be the set of all closed element
edges comprising $\Gamma$, i.e.\
$\Gamma = \bigcup \mathcal{E}^{\Gamma}_H$. We let
$\mathcal{T}^{\Gamma}(N) = \{T \in \mathcal{T}(N) \,:\, N \in E \text{ and } E \subset T
\text{ for some } E \in \mathcal{E}^{\Gamma}_H \}$, i.e.\ the
adjacent triangles with at least one edge connected to the node also intersecting
the fracture.  With this choice, $\mathcal{N}^{\Gamma}$ contains the
free nodes that intersect with $\Gamma$ and $\mathcal{N}^{\Omega}$
contains the remaining free nodes. See Figure~\ref{fig:sigma_edge} for
illustrations of the resulting integration domains for a node in each
set.

\definecolor{beaublue}{rgb}{0.74, 0.83, 0.9}
\newcommand{\gridcolor}{darkgray}
\newcommand{\trianglecolor}{beaublue}

\newcommand{\meshhexahedron} {%
  \draw[\gridcolor] (1,1) -- (1,-1) -- (-1,-1) -- (-1, 1) -- (1, 1);
  \draw[\gridcolor] (0,0) -- (1,1);
  \draw[\gridcolor] (0,0) -- (1,-1);
  \draw[\gridcolor] (0,0) -- (-1,1);
  \draw[\gridcolor] (0,0) -- (-1,-1);
}

\begin{figure}[]
  \centering
  \subfloat[][The node $N$ is in $\mathcal{N}^{\Omega}$ since $\Gamma$ is not along any edges connecting with the node, which means that $\mathcal{T}^\Gamma(N)$ is empty. All adjacent triangles are used as integration domains $\sigma$.]{
    \hspace{4cm}
    \begin{tikzpicture}[scale=1]
      \clip (-1.4,-1.2) rectangle + (2.8,2.4);
      \fill[\trianglecolor] (-1,1) -- (1,1) -- (0, 0) -- (-1,1);
      \fill[\trianglecolor] (1,1) -- (1,-1) -- (0, 0) -- (1,1);
      \fill[\trianglecolor] (1,-1) -- (-1,-1) -- (0, 0) -- (1,-1);
      \fill[\trianglecolor] (-1,-1) -- (-1,1) -- (0, 0) -- (-1,-1);
      \meshhexahedron;
      \node[left] at (-0.45, 0) {$T_1$};
      \node[below] at (0, -0.45) {$T_2$};
      \node[right] at (0.45, 0) {$T_3$};
      \node[above] at (0, 0.45) {$T_4$};
      \node at (0,0)[circle,fill, inner sep=1pt, red]{};
    \end{tikzpicture}\label{fig:sigma_edge_a}
    \hspace{1ex}
    \begin{tikzpicture}[scale=1]
      \clip (-1.4,-1.2) rectangle + (2.8,2.4);
      \fill[\trianglecolor] (-1,1) -- (1,1) -- (0, 0) -- (-1,1);
      \fill[\trianglecolor] (1,1) -- (1,-1) -- (0, 0) -- (1,1);
      \fill[\trianglecolor] (1,-1) -- (-1,-1) -- (0, 0) -- (1,-1);
      \fill[\trianglecolor] (-1,-1) -- (-1,1) -- (0, 0) -- (-1,-1);
      \draw[very thick,color=orange,dashed] (-2, 1) -- (2, 1);
      \meshhexahedron;
      \node[left] at (-0.45, 0) {$T_1$};
      \node[below] at (0, -0.45) {$T_2$};
      \node[right] at (0.45, 0) {$T_3$};
      \node[above] at (0, 0.45) {$T_4$};
      \node at (0,0)[circle,fill, inner sep=1pt, red]{};
    \end{tikzpicture}\label{fig:sigma_edge_a}
    \hspace{4cm}
  }
  
  \subfloat[][The node $N$ is in $\mathcal{N}^{\Gamma}$ since $\Gamma$ contains $N$ and $T_1,T_2,T_3\in \mathcal{T}^{\Gamma}(N)$. The integration domains $\sigma$ are marked in solid red.]{
    \hspace{2cm}
    \begin{tikzpicture}[scale=1]
      \clip (-1.4,-1.2) rectangle + (2.8,2.4);
      \fill[\trianglecolor] (-1,1) -- (-1,-1) -- (0, 0) -- (-1,1);
      \meshhexahedron;
      \draw[very thick,color=orange,dashed] (-2, 1) -- (2, 1);
      \draw[very thick,color=orange,dashed] (-2, -1) -- (-1, -1) -- (0, 0) -- (1, -1) -- (2, -1);
      \draw[very thick,color=red] (-1, -1) -- (0, 0);
      \node[left] at (-0.45, 0) {$T_1$};
      \node[below,color=gray] at (0, -0.45) {$T_2$};
      \node[right,color=gray] at (0.45, 0) {$T_3$};
      \node[above,color=gray] at (0, 0.45) {$T_4$};
      \node[left] at (-0.2, -0.2) {$\sigma$};
      \node[left] at (-1.1, -0.8) {$\Gamma$};
      \node at (0,0)[circle,fill, inner sep=1pt, red]{};
    \end{tikzpicture}
    \hspace{1ex}
    \begin{tikzpicture}[scale=1]
      \clip (-1.4,-1.2) rectangle + (2.8,2.4);
      \fill[\trianglecolor] (-1,-1) -- (1,-1) -- (0, 0) -- (-1,-1);
      \meshhexahedron;
      \draw[very thick,color=orange,dashed] (-2, 1) -- (2, 1);
      \draw[very thick,color=orange,dashed] (-2, -1) -- (-1, -1) -- (0, 0) -- (1, -1) -- (2, -1);
      \draw[very thick,color=red] (-1, -1) -- (0, 0);
      \draw[very thick,color=red] (0, 0) -- (1, -1);
      \node[left,color=gray] at (-0.45, 0) {$T_1$};
      \node[below] at (0, -0.45) {$T_2$};
      \node[right,color=gray] at (0.45, 0) {$T_3$};
      \node[above,color=gray] at (0, 0.45) {$T_4$};
      \node[below] at (0, -0.1) {$\sigma$};
      \node[left] at (-1.1, -0.8) {$\Gamma$};
      \node at (0,0)[circle,fill, inner sep=1pt, red]{};
    \end{tikzpicture}
    \hspace{1ex}
    \begin{tikzpicture}[scale=1]
      \clip (-1.4,-1.2) rectangle + (2.8,2.4);
      \fill[\trianglecolor] (1,1) -- (1,-1) -- (0, 0) -- (1,1);
      \meshhexahedron;
      \draw[very thick,color=orange,dashed] (-2, 1) -- (2, 1);
      \draw[very thick,color=orange,dashed] (-2, -1) -- (-1, -1) -- (0, 0) -- (1, -1) -- (2, -1);
      \draw[very thick,color=red] (0, 0) -- (1, -1);
      \node[left,color=gray] at (-0.45, 0) {$T_1$};
      \node[below,color=gray] at (0, -0.45) {$T_2$};
      \node[right] at (0.45, 0) {$T_3$};
      \node[above,color=gray] at (0, 0.45) {$T_4$};
      \node[right] at (0.2, -0.2) {$\sigma$};
      \node[left] at (-1.1, -0.8) {$\Gamma$};
      \node at (0,0)[circle,fill, inner sep=1pt, red]{};
    \end{tikzpicture}
    \hspace{2cm}
    \label{fig:sigma_edge_b}
  }
  \caption{Integration domains $\sigma$ for a node $N$ (center point) when $\Gamma$ (dashed line) is a union of coarse element edges.}
  \label{fig:sigma_edge}
\end{figure}
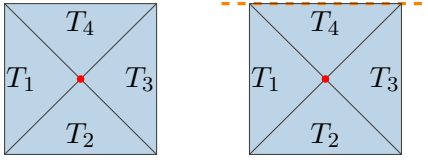
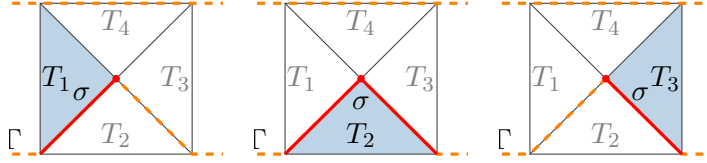

Before providing the proof that $\IH$ satisfies
Assumption~\ref{assumptionIH} for this case, we make a few notes regarding the
solution of \eqref{eq:psi}. If $\Gamma$ is a union of coarse element
edges, then $\sigma$ is either a coarse triangle or a union of coarse
edges. If $\sigma$ is a coarse
triangle, then \eqref{eq:psi} is a $(d+1) \times (d+1)$ linear system
with the coarse basis mass matrix integrated over the coarse
triangle. If $\sigma$ is a union of $n$ edges then \eqref{eq:psi} is a
$(n+1) \times (n+1)$ linear system with the coarse basis mass matrix
integrated over those edges.

\begin{lemma}\label{lemma_SZ}
  If $\Gamma$ is a union of coarse edges of the elements in $\T$, then
  interpolation operator $\IH$ in \eqref{SZ} with
  $\mathcal{T}^{\Gamma}(N) = \{T \in \mathcal{T}(N) \,:\, N \in E \text{ and } E \subset T 
\text{ for some } E \in \mathcal{E}^{\Gamma}_H \}$ satisfies
  Assumption~\ref{assumptionIH}.
\end{lemma}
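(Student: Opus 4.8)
The plan is to adapt the classical Scott--Zhang argument \cite{Scott1990}, the only genuinely new point being the interplay between the two-dimensional bulk quantities and the one-dimensional quantities attached to $\G$. Write $\IH v = \sum_{N}\alpha_N(v)\lambda_N$, where $\alpha_N(v)$ denotes the averaged nodal variable appearing in \eqref{SZ}, and let $\omega_N := \bigcup\mathcal{T}(N)$ be the node patch of $N$. First I would isolate three standard facts. \emph{Locality}: $\IH v|_T$ involves only the nodes of $T$, and their integration domains are coarse triangles contained in $\UU(T)$ or, for nodes in $\mathcal{N}^{\Gamma}$, unions of coarse $\G$-edges contained in $\UU_{\G}(T)$; since $\G$ is a union of coarse edges, whenever $\G_T$ has positive length the nodes of $T$ whose basis functions do not vanish on $\G_T$ all lie in $\mathcal{N}^{\Gamma}$, so $\IH v|_{\G_T}$ depends only on $v|_{\UU_{\G}(T)}$. \emph{Reproduction}: \eqref{eq:psi} gives $\alpha_N(c)=c$ for every constant $c$ and every free node $N$, hence local reproduction of constants on elements and edges all of whose nodes are free; the modifications near $\partial\Omega$, using $v|_{\partial\Omega}=0$, are indicated at the end. \emph{Scaling}: a change of variables to a reference triangle or reference edge gives $\|\psi_{N,\sigma}\|_{\sigma}\le C|\sigma|^{-1/2}$, whence $\|\psi_{N,T'}\|_{T'}\le CH^{-1}$ and $\|\psi_{N,\G_{T'}}\|_{\G_{T'}}\le CH^{-1/2}$, together with $\|\lambda_N\|_{T'}\le CH$, $\|\lambda_N\|_{\G_{T'}}\le CH^{1/2}$, $\|\nabla\lambda_N\|_{T'}\le C$ and $\|\nabla_{\bt}\lambda_N\|_{\G_{T'}}\le CH^{-1/2}$, with constants depending only on $\gamma$. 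Combining these with Cauchy--Schwarz in each integral of \eqref{SZ} yields the nodal estimates $|\alpha_N(v)|\le CH^{-1}\|v\|_{\omega_N}$ for $N\in\mathcal{N}^{\Omega}$ and $|\alpha_N(v)|\le CH^{-1/2}\|v\|_{\G\cap\omega_N}$ for $N\in\mathcal{N}^{\Gamma}$.

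The crucial auxiliary ingredient is a trace inequality on $\G$ with the correct power of $H$: since $\G\cap\UU(T)$ is a union of coarse edges, summing the elementwise trace inequality over those edges and the corresponding elements gives, for all $w\in H^1(\UU(T))$,
\[
  \|w\|_{\G\cap\UU(T)}^2 \le C\big(H^{-1}\|w\|_{\UU(T)}^2 + H\|\nabla w\|_{\UU(T)}^2\big),
\]
and likewise with $\UU(T)$ replaced by a node patch $\omega_N$ with $N$ a node of $T$. To prove \eqref{IH_error} I would bound $\|v-\IH v\|_T$ and $\|v-\IH v\|_{\G_T}$ separately. For the first, take $\bar v$ to be the mean of $v$ over $\UU(T)$ (or $\bar v=0$ near $\partial\Omega$), so that $v-\IH v=(v-\bar v)-\IH(v-\bar v)$ on $T$ and a Poincar\'e (resp.\ Friedrichs) inequality on the patch gives $\|v-\bar v\|_{\UU(T)}\le CH\|\nabla v\|_{\UU(T)}$; expanding $\IH(v-\bar v)$ over the at most three nodes of $T$, each $N\in\mathcal{N}^{\Omega}$ contributes at most $CH^{-1}\|v-\bar v\|_{\omega_N}\|\lambda_N\|_T\le CH\|\nabla v\|_{\UU(T)}$, while each $N\in\mathcal{N}^{\Gamma}$ contributes at most $CH^{-1/2}\|v-\bar v\|_{\G\cap\omega_N}\|\lambda_N\|_T$, which after applying the trace inequality to $w=v-\bar v$ (giving $\|v-\bar v\|_{\G\cap\omega_N}\le CH^{1/2}\|\nabla v\|_{\UU(T)}$) is again $\le CH\|\nabla v\|_{\UU(T)}$. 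For $\|v-\IH v\|_{\G_T}$ the locality observation shows that only $\G$-based data enters, so the estimate is purely one-dimensional: with $\bar v$ the mean of $v$ over the connected component of $\UU_{\G}(T)$ containing $\G_T$ (or $0$ near $\partial\Omega$), the one-dimensional Poincar\'e/Friedrichs inequality gives $\|v-\bar v\|_{\UU_{\G}(T)}\le CH\|\nabla_{\bt}v\|_{\UU_{\G}(T)}$, and then $\|\IH(v-\bar v)\|_{\G_T}\le\sum_N CH^{-1/2}\|v-\bar v\|_{\UU_{\G}(T)}\,CH^{1/2}\le CH\|\nabla_{\bt}v\|_{\UU_{\G}(T)}$.

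The stability bound \eqref{IH_stability} follows the same pattern. Using local reproduction of constants, $\nabla\IH v|_T=\nabla\IH(v-\bar v)|_T=\sum_N\alpha_N(v-\bar v)\nabla\lambda_N|_T$ with $\bar v$ the mean of $v$ over $\UU(T)$; since $|\alpha_N(v-\bar v)|\le C\|\nabla v\|_{\UU(T)}$ for every node $N$ of $T$ --- by the patch Poincar\'e inequality for $\mathcal{N}^{\Omega}$-nodes and by the trace inequality together with that Poincar\'e estimate for $\mathcal{N}^{\Gamma}$-nodes --- and $\|\nabla\lambda_N\|_T\le C$, we obtain $\|\nabla\IH v\|_T\le C\|\nabla v\|_{\UU(T)}$. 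Entirely analogously, $\nabla_{\bt}\IH v|_{\G_T}=\sum_N\alpha_N(v-\bar v)\nabla_{\bt}\lambda_N|_{\G_T}$ with $\bar v$ now the mean over the connected component of $\UU_{\G}(T)$ containing $\G_T$, and $|\alpha_N(v-\bar v)|\le CH^{-1/2}\|v-\bar v\|_{\UU_{\G}(T)}\le CH^{1/2}\|\nabla_{\bt}v\|_{\UU_{\G}(T)}$ together with $\|\nabla_{\bt}\lambda_N\|_{\G_T}\le CH^{-1/2}$ gives $\|\nabla_{\bt}\IH v\|_{\G_T}\le C\|\nabla_{\bt}v\|_{\UU_{\G}(T)}$.

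I expect the main obstacle to be exactly this coupling of scales: the nodal variables of the nodes on $\G$ are one-dimensional averages, carrying an $H^{-1/2}$ in their scaling, yet they must be controlled in the two-dimensional norms $\|\cdot\|_T$ and $\|\nabla\cdot\|_T$ on the left of \eqref{IH_error}--\eqref{IH_stability}; the compensating $H^{1/2}$ comes from the trace inequality above, and checking that all the powers of $H$ combine to exactly the asserted right-hand sides is the delicate part of the bookkeeping. The remaining steps are routine: applying the Poincar\'e/Friedrichs inequalities component-wise on $\UU_{\G}(T)$, estimating the finitely many terms in each local sum, and the modifications for elements touching $\partial\Omega$ --- there $\IH$ reproduces constants only after incorporating the Dirichlet condition, so one takes $\bar v=0$ and replaces the Poincar\'e inequality on the patch by the Friedrichs inequality, which uses $v|_{\partial\Omega}=0$.
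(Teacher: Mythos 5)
Your proposal is correct and follows essentially the same route as the paper: both rest on the classical Scott--Zhang argument (dual bases, scaling to a reference element, local reproduction of constants, trace and Poincar\'e/Friedrichs inequalities on patches), with the integration domains for nodes on $\G$ taken to be unions of coarse edges and the restriction of $\IH$ to $\G$ treated as a one-dimensional Scott--Zhang operator. The only difference is one of presentation: the paper delegates the details to \cite{Scott1990} and merely notes that the argument survives when $\sigma$ is a union of edges and when nodal variables are averaged over several domains, whereas you carry out the bookkeeping explicitly, in particular the trace-inequality step that converts the $H^{-1/2}$ scaling of the one-dimensional nodal variables on $\G$ into the two-dimensional bounds.
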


\begin{proof}
  To establish
  \begin{align*}
    \| v-\IH v \|_{T} &\leq C_{\gamma} H \| \nabla v \|_{\UU(T)}, \\
    \| \nabla \IH v \|_{T} & \leq C_{\gamma}\| \nabla v \|_{\UU(T)},
  \end{align*}
  we apply the proof in \cite{Scott1990}.

  Although the integration domains $\sigma$ are only considered to be
  a single edge (or subsimplex) in that paper, the proof still holds
  with $\sigma$ being a union of edges: The argument with an affine
  transformation to a reference element can again be applied when
  $\sigma$ is a union of edges and thus the bound of the
  $L^\infty$-norm of $\psi$ that is established in
  \cite[Lemma~3.1]{Scott1990} holds also in our case.

  In \cite{Scott1990} they pick a single triangle or edge per node to
  be used in the nodal variable, while $\IH$ averages over multiple
  triangles. This does not affect the stability and approximability
  result, since if it holds for all triangles and edges individually,
  it holds also for their average.

  We further note that $\IH$ is also a Scott--Zhang interpolation
  operator from $\widetilde{V}$ to $\widetilde{V}_H$, where
  $\widetilde{V}$ and $\widetilde{V}_H$ are $V$ and $V_H$ restricted
  to interface $\G$, respectively. Therefore, the stability and error
  bounds in \cite{Scott1990} are valid on $\G$ such that
  \begin{align*}
    \| v-\IH v \|_{\G_T} &\leq C_{\gamma} H \| \nabla_{\bt}  v \|_{\UU_\G(T)}, \\
    \| \nabla_{\bt}  \IH v \|_{\G_T} & \leq C_{\gamma}\| \nabla_{\bt}  v \|_{\UU_\G(T)}.
  \end{align*}
  As a consequence, Assumption \ref{assumptionIH} is satisfied with $C = C_\gamma$.
\end{proof}

\subsection{The fracture intersects with the interior of triangles}
Next, we consider the case when the fracture $\G$ is not a union of
edges in $\mathcal{T}_H$, but intersects with the interior of
triangles. We will then pick integration domains $\sigma$ to be in the
interior of the triangles as well.  The shape of the integration
domain influences the dual basis norm $\|\psi\|_\sigma$, which in turn
influences the stability of the interpolation operator as a whole. For
instance, the linear system \eqref{eq:psi} that determines $\psi$ will
be rank deficient if $\sigma$ is a straight line in the intersection
with the interior of $T$. For almost straight lines, the dual basis
norm can become very large. In case the system is rank deficient but
has infinitely many solutions, it is easy to see that any two
solutions $\psi$ and $\psi'$ have the same $L^2(\sigma)$-norm
$\| \psi \|_{\sigma} = \| \psi' \|_{\sigma}$.  Infinite number of
solutions arise, for example, when  $\sigma$ intersects with the
interior of $T$, is a straight line and passes through the node $N$.
Next, we study an example where this norm is computed for two
parametrized shapes of $\sigma$ that degenerate to straight lines as
the parameter increases.

\newcommand{\referencetriangle} {%
  \draw[\gridcolor] (0,0) -- (-1,1) -- (1,1) -- (0,0);
}

\begin{figure}[h]
  \centering
  \subfloat[][Shape 1]{
    \begin{tikzpicture}[scale=1.3]
      \referencetriangle;
      \begin{scope}
        \clip (-1.3,-0.5) rectangle + (2.6,2);
        \draw[very thick,color=orange,dashed] (0,4) circle ({sqrt(9+1)});
      \end{scope}
      \begin{scope}
        \clip (-1,0) rectangle + (2,2);
        \draw[very thick,color=red] (0,4) circle ({sqrt(9+1)});
      \end{scope}
      \node at (0,0)[circle,fill, inner sep=1pt, red]{};
      \node at (-1,1)[circle,fill, inner sep=1pt, red]{};
      \node at (1,1)[circle,fill, inner sep=1pt, red]{};
      \node[below] at (0, 0) {$N_1$};
      \node[above] at (-1, 1) {$N_2$};
      \node[above] at (1, 1) {$N_3$};
      \node[below] at (0, 0.8) {$\sigma$};
    \end{tikzpicture}
  }
  \subfloat[][Shape 2]{
    \begin{tikzpicture}[scale=1.3]
      \referencetriangle;
      \begin{scope}
        \clip (-1.3,-0.5) rectangle + (2.6,2);
        \draw[very thick,color=orange,dashed] (0,4) circle ({sqrt(3.5*3.5+0.5*0.5)});
      \end{scope}
      \begin{scope}
        \clip (-0.5,0) rectangle + (1,2);
        \draw[very thick,color=red] (0,4) circle ({sqrt(3.5*3.5+0.5*0.5)});
      \end{scope}
      \node at (0,0)[circle,fill, inner sep=1pt, red]{};
      \node at (-1,1)[circle,fill, inner sep=1pt, red]{};
      \node at (1,1)[circle,fill, inner sep=1pt, red]{};
      \node[below] at (0, 0) {$N_1$};
      \node[above] at (-1, 1) {$N_2$};
      \node[above] at (1, 1) {$N_3$};
      \node[above] at (0, 0.5) {$\sigma$};
    \end{tikzpicture}
  }
  
  \caption{Illustration of the geometry when $a=3$ in Example~\ref{ex:opB}.}
  \label{fig:opBexample}
\end{figure}
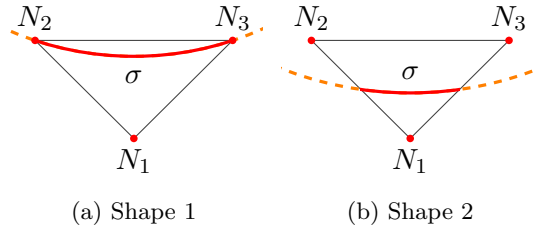

\begin{table}[h]
  \caption{Behaviour of dual basis norm when $\sigma$ degenerates to a straight line in Example~\ref{ex:opB}.}
  \label{tbl:psi}
  \begin{center}
    \begin{tabular}{c|c|c|c|c}
      \multirow{2}{*}{$a$} & \multicolumn{2}{c|}{Shape 1} & \multicolumn{2}{c}{Shape 2} \\
                           &  $\|\psi_1\|_{\sigma}$ & $\|\psi_2\|_{\sigma}$ & $\|\psi_1\|_{\sigma}$ & $\|\psi_2\|_{\sigma}$ \\
      \hline
      2        & $3.9\phantom{{} \times 10^0}$   & 2.0 & $1.8 \times 10^1$  & $2.3 \times 10^1$ \\
      20       & $8.9 \times 10^1$              & 2.1 & $2.6 \times 10^2$  & $2.6 \times 10^2$ \\
      200      & $9.4 \times 10^2$              & 2.1 & $2.7 \times 10^3$  & $2.7 \times 10^3$ \\
      2000     & $9.5 \times 10^3$              & 2.1 & $3.8 \times 10^4$  & $3.8 \times 10^4$ \\
    \end{tabular}
  \end{center}
\end{table}
\begin{example}[Dual basis norm]
  \label{ex:opB}
  Let a triangle $T$ have vertices  $N_1 = (0, 0)$, $N_2 = (-1, 1)$,
  and $N_3 = (1, 1)$. The fracture $\G$ is defined as a circle with
  midpoint $(0, a)$ and radius $r$ such that the circle intersects
  with the triangle. The intersection of $T$ and $\G$ is the arc
  $\sigma$. For Shape~1, we let $r = r(a)$ be determined by $a$ so
  that the arc $\sigma$ connects the two nodes $(-1, 1)$ and $(1,
  1)$. For Shape~2, we let $r = r(a)$ take the value that makes
  $\sigma$ connect $(-0.5, 0.5)$ and $(0.5, 0.5)$. See
  Figure~\ref{fig:opBexample} for an illustration. Note that both
  Shape~1 and 2 degenerate to straight lines as $a \to
  \infty$. Shape~1 degenerates to an element edge, while Shape~2
  degenerates to a straight line in the middle of the triangle. The
  values of $\|\psi_1\|_{\sigma}$ and $\|\psi_2\|_{\sigma}$
  (${} = \|\psi_3\|_{\sigma}$ due to symmetry) for a few values of $a$
  and the two shapes are presented in Table~\ref{tbl:psi}.
\end{example}

Because of this interplay between the mesh and the fracture geometry,
we define $\mathcal{T}^{\Gamma}(N)$ to adaptively discard (by the
means of an indicator) integration domains that give rise to large
dual basis norms. We recall from \cite{Scott1990} or
\cite[Lemma~3.4]{Hellman2017} how the dual basis for
$(d-1)$-dimensional $\sigma$, the norm of $\psi$ scales with the
triangle diameter as follows
\begin{equation*}
  \|\psi\|_{\sigma} \le C_\gamma \operatorname{diam}(T)^{(1-d)/2} \|\hat \psi\|_{\hat \sigma},
\end{equation*}
where $\hat \psi$ is the solution to \eqref{eq:psi} in a transformed
coordinate system such that $T$ is transformed to the simplex
reference element $\hat T$ of diameter 1. When considering a node $N$, we
define an indicator for all $T \in \mathcal{T}(N)$, 
\begin{equation*}
  s_{N,T} =
  \begin{cases}
    \operatorname{diam}(T)^{(d-1)/2}\|\psi_{N,\G_T}\|_{\G_T} \qquad & \text{if there is a $\psi_{N,\sigma}$ solving \eqref{eq:psi} with $\sigma = T \cap \G$}, \\
    +\infty \qquad & \text{otherwise}.
  \end{cases}
\end{equation*}
Given a threshold value $\Sigma$, we let
\begin{equation}\label{Sigma1}
  \mathcal{T}^{\Gamma}(N) = \{ T \in \mathcal{T}(N)\,:\, s_{N,T} < \Sigma \}
\end{equation}
and use this $\mathcal{T}^{\Gamma}(N)$ to define $\IH$ in \eqref{SZ}. See Figure~\ref{fig:sigma_not_edge} for an illustration of how $\Sigma$ affects the choice of integration domain. For the previously discussed special case that the fracture is a union of edges, we obtain the same set as in Lemma~\ref{lemma_SZ} by $\mathcal{T}^{\Gamma}(N) = \{ T \in \mathcal{T}(N)\,:\, s_{N,T} \text{ is finite} \}$.

\begin{figure}
  \centering
  \subfloat[][$\Sigma$ is large so that $T_3 \in \mathcal{T}^{\Gamma}(N)$ and $N \in \mathcal{N}^{\Gamma}$. The red solid line will be $\sigma$.]{
    \hspace{2.5cm}
    \begin{tikzpicture}[scale=1]
      \clip (-1.4,-1.2) rectangle + (2.8,2.4);
      \fill[\trianglecolor] (1,1) -- (1,-1) -- (0, 0) -- (1,1);
      \meshhexahedron;
      \draw[very thick,color=orange,dashed] (2, -2) -- (1, -1) .. controls (0.8, 0) and (0.7, 0) .. (1, 1) -- (2, 2);
      \draw[very thick,color=red] (1, -1) .. controls (0.8, 0) and (0.7, 0) .. (1, 1);
      \node[right] at (0.1, 0) {$T_3$};
      \node[right] at (0.5, 0.3) {$\sigma$};
      \node[right] at (1, -0.9) {$\Gamma$};
      \node at (0,0)[circle,fill, inner sep=1pt, red]{};
    \end{tikzpicture}
    \hspace{2.5cm}}
  \subfloat[][$\Sigma$ is small so that $\mathcal{T}^{\Gamma}(N)$ is empty and $N \in \mathcal{N}^{\Omega}$. All adjacent triangles are used as integration domains $\sigma$.]{
    \hspace{2.5cm}
    \begin{tikzpicture}[scale=1]
      \clip (-1.4,-1.2) rectangle + (2.8,2.4);
      \fill[\trianglecolor] (-1,1) -- (1,1) -- (0, 0) -- (-1,1);
      \fill[\trianglecolor] (1,1) -- (1,-1) -- (0, 0) -- (1,1);
      \fill[\trianglecolor] (1,-1) -- (-1,-1) -- (0, 0) -- (1,-1);
      \fill[\trianglecolor] (-1,-1) -- (-1,1) -- (0, 0) -- (-1,-1);
      \meshhexahedron;
      \draw[very thick,color=orange,dashed] (2, -2) -- (1, -1) .. controls (0.8, 0) and (0.7, 0) .. (1, 1) -- (2, 2);
      \node[left] at (-0.45, 0) {$T_1$};
      \node[below] at (0, -0.45) {$T_2$};
      \node[right] at (0.1, 0) {$T_3$};
      \node[above] at (0, 0.45) {$T_4$};
      \node[right] at (1, -0.9) {$\Gamma$};
      \node at (0,0)[circle,fill, inner sep=1pt, red]{};
    \end{tikzpicture}
    \hspace{2.5cm}
  }
  
%  \subfloat[][$\Gamma$ is in the node patch and close to the node. $\Sigma$ is small so that only $T_3 \in \mathcal{T}^{\Gamma}(N)$ (making  $N \in \mathcal{N}^{\Gamma}$). $T_1$ and $T_2$ would have been in $\mathcal{T}^{\Gamma}(N)$ had $\Sigma$ been greater. The red solid line will be used as integration domain $\sigma$.]{
%    \hspace{4cm}
%    \begin{tikzpicture}[scale=1]
%      \clip (-1.4,-1.2) rectangle + (2.8,2.4);
%      \fill[\trianglecolor] (1,1) -- (1,-1) -- (0, 0) -- (1,1);
%      \meshhexahedron;
%      \draw[very thick,color=orange,dashed] (-2, -1) -- (-1, -1) .. controls (-0.7, -1) and (-0.3, 0) .. (0, 0) .. controls (0.3, 0) and (1, -0.7) .. (1, -1) -- (2, -2);
%      \draw[very thick,color=red] (0, 0) .. controls (0.3, 0) and (1, -0.7) .. (1, -1);
%      \node[left] at (-0.45, 0) {$T_1$};
%      \node[below] at (0, -0.45) {$T_2$};
%      \node[right] at (0.45, 0) {$T_3$};
%      \node[right] at (0.6, -0.3) {$\sigma$};
%      \node[right] at (1, -0.9) {$\Gamma$};
%      \node at (0,0)[circle,fill, inner sep=1pt, red]{};
%    \end{tikzpicture} \label{fig:sigma_not_edge_d}
%    \hspace{4cm}
%  }

  \caption{Integration domains $\sigma$ for a node $N$ (center point) when $\Gamma$ (dashed line) is not a union of element edges. $\Gamma$ is in the node patch but far from the node.}
  \label{fig:sigma_not_edge}
\end{figure}
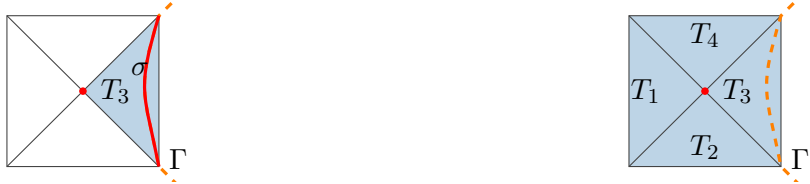

\section{Error Analysis}
\label{sec:decay}
In this section, we derive error bounds for the proposed method. First, we prove that the correctors decay exponentially fast from the support. Next, we analyze the local and global truncation error. Finally, we derive an a priori error bound for the full LOD method.

We state a stability bound for the correctors in Definition \ref{def_ec}.
%\begin{lemma}\label{lemma_lambda_stability}
%The solution to \eqref{Qdef} satisfies the estimate
%\be\label{lambda_stability}
%\|\nabla\co_{T,i}\|_{\Om}^2 + \|\nabla_{\bt}\co_{T,i}\|_{\G_{\Om}}^2 \leq C_{A,A_{\G}} (|T|+|\G_{T}|),
%\eb
%where $|T|$ is the area of $T$, and $|\G_T|$ is the length of $\G_T$.
%\end{lemma}
%
%\begin{proof}
%Let $w=\co_{T,i}$ in \eqref{Qdef}. We have
%\begin{align*}
%a(\co_{T,i}, \co_{T,i}) &= (A\nabla\co_{T,i},\nabla\co_{T,i})_{\Om}+ (A_{\G}\nabla_{\bt}\co_{T,i},\nabla_{\bt}\co_{T,i})_{\G} \\
%&\geq  C_{A,A_{\G}}(\|\nabla\co_{T,i}\|^2_{\Om}+ \|\nabla_{\bt}\co_{T,i}\|^2_{\G}),
%\end{align*}
%By Cauchy-Schwarz inequality,
%\begin{align*}
%a(\co_{T,i}, \co_{T,i}) &\leq \| A \bs{e_i}\|_T \|\nabla \co_{T,i}\|_T + \|A_{\G} (\bs{e_i}\cdot\bt)\|_{\G_T} \|\nabla \co_{T,i} \cdot\bt\|_{\G_T} \\
%&\leq C_{A,A_{\G}}(\| \bs{e_i}\|_T \|\nabla \co_{T,i}\|_T + \|\bs{e_i}\cdot\bt\|_{\G_T} \|\nabla \co_{T,i} \cdot\bt\|_{\G_T}),
%\end{align*}
%We combine the above two inequalities to obtain
%\[
%\|\nabla\co_{T,i}\|^2_{\Om}+ \|\nabla_{\bt}\co_{T,i}\|^2_{\G} \leq C_{A,A_{\G}}(\| \bs{e_i}\|_T \|\nabla \co_{T,i}\|_T + \|\bs{e_i}\cdot\bt\|_{\G_T} \|\nabla \co_{T,i} \cdot\bt\|_{\G_T}).
%\]
%It then follows that,
%\[
%\|\nabla\co_{T,i}\|_{\Om}^2+ \|\nabla_{\bt}\co_{T,i}\|_{\G}^2 \leq C_{A,A_{\G}}(\| \bs{e_i}\|_T^2  + \|\bs{e_i}\cdot\bt\|_{\G_T} ^2)= C_{A,A_{\G}}(|T|+|\G_{T}|),
%\]
%which proves \eqref{lambda_stability}.
%\end{proof}
\begin{lemma}\label{lemma_lambda_stability}
%The solution to \eqref{Qdef} satisfies the estimate
The corrector $\co_T$ defined in \eqref{Qdef} satisfies the bound
\be\label{lambda_stability}
\|\nabla\co_{T}\|_{\Om}^2 + \|\nabla_{\bt}\co_{T}\|_{\G}^2 \leq C_{\alpha,\beta} (\|\nabla v\|_T^2+\|\nabla_{\bt} v\|_{\G_T}^2),
\eb
\revv{with $\alpha$ and $\beta$ from \eqref{ab}.}
\end{lemma}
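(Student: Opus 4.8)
The plan is to test the corrector equation \eqref{Qdef} with the natural choice $w = \co_T$ itself, which is admissible since $\co_T \in V_\mr{f}$. This gives
\[
a(\co_T, \co_T) = a_T(v, \co_T) = \int_T A \nabla v \cdot \nabla \co_T + \int_{\G_T} A_\G \nabla_{\bt} v \cdot \nabla_{\bt} \co_T.
\]
On the left-hand side, the coercivity bound from \eqref{ab} yields $a(\co_T, \co_T) = (A\nabla\co_T, \nabla\co_T)_\Om + (A_\G \nabla_{\bt}\co_T, \nabla_{\bt}\co_T)_\G \geq \alpha(\|\nabla \co_T\|_\Om^2 + \|\nabla_{\bt}\co_T\|_\G^2)$. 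On the right-hand side I would apply the Cauchy--Schwarz inequality on each term together with the upper bound $\beta$, obtaining $a_T(v,\co_T) \leq \beta(\|\nabla v\|_T \|\nabla\co_T\|_T + \|\nabla_{\bt} v\|_{\G_T}\|\nabla_{\bt}\co_T\|_{\G_T})$. Since $\|\nabla\co_T\|_T \leq \|\nabla\co_T\|_\Om$ and $\|\nabla_{\bt}\co_T\|_{\G_T} \leq \|\nabla_{\bt}\co_T\|_\G$, and using $xy_1 + zy_2 \leq \sqrt{x^2+z^2}\sqrt{y_1^2+y_2^2}$, the right-hand side is bounded by $\beta (\|\nabla v\|_T^2 + \|\nabla_{\bt} v\|_{\G_T}^2)^{1/2}(\|\nabla\co_T\|_\Om^2 + \|\nabla_{\bt}\co_T\|_\G^2)^{1/2}$.

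Combining the two estimates, I divide through by $(\|\nabla\co_T\|_\Om^2 + \|\nabla_{\bt}\co_T\|_\G^2)^{1/2}$ (handling the trivial case where this vanishes separately), which leaves $\alpha (\|\nabla\co_T\|_\Om^2 + \|\nabla_{\bt}\co_T\|_\G^2)^{1/2} \leq \beta (\|\nabla v\|_T^2 + \|\nabla_{\bt} v\|_{\G_T}^2)^{1/2}$. Squaring both sides gives exactly \eqref{lambda_stability} with $C_{\alpha,\beta} = \beta^2/\alpha^2$.

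I do not anticipate any real obstacle here: the argument is the standard energy estimate for a Galerkin/Lax--Milgram-type problem, and the only mild subtleties are bookkeeping the two inner-product terms (bulk plus interface) simultaneously via the combined Cauchy--Schwarz step, and noting that restricting the integration domain from $\Om$ to $T$ (resp.\ $\G$ to $\G_T$) only decreases the norm on the right-hand side, so the bound is consistent. The degenerate case $\co_T = \text{const}$ (forcing $\|\nabla\co_T\|_\Om^2 + \|\nabla_{\bt}\co_T\|_\G^2 = 0$) is trivial because then the inequality holds vacuously with zero on the left.
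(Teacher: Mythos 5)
Your argument is correct and is essentially identical to the paper's proof: both test \eqref{Qdef} with $w=\co_T$, use coercivity from \eqref{ab} on the left and Cauchy--Schwarz with the upper bound $\beta$ on the right, and then absorb the corrector norms. Your version is slightly more explicit in the final absorption step (the combined Cauchy--Schwarz and division, yielding $C_{\alpha,\beta}=\beta^2/\alpha^2$), but the route is the same.
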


\begin{proof}
From \eqref{scalar_a}, we obtain
\begin{align*}
a(\co_{T}, \co_{T}) &= (A\nabla\co_{T},\nabla\co_{T})_{\Om}+ (A_{\G}\nabla_{\bt}\co_{T},\nabla_{\bt}\co_{T})_{\G} \\
&\geq  C_{\alpha}(\|\nabla\co_{T}\|^2_{\Om}+ \|\nabla_{\bt}\co_{T}\|^2_{\G}).
\end{align*}
Using \eqref{Qdef} with $w=\co_{T}$ and the Cauchy--Schwarz inequality yields,
\begin{align*}
a(\co_{T}, \co_{T}) &= \int_T A\nabla v\cdot\nabla \co_T + \int_{\G_T} A_{\G} \nabla_{\bt} v \cdot \nabla_{\bt} \co_T \\
&\leq \| A \nabla v\|_T \|\nabla \co_{T}\|_T + \|A_{\G} \nabla_{\bt} v\|_{\G_T} \|\nabla_{\bt} \co_{T} \|_{\G_T} \\
&\leq C_{\beta}(\|\nabla v\|_T \|\nabla \co_{T}\|_T + \|\nabla_{\bt} v\|_{\G_T} \|\nabla_{\bt} \co_{T} \|_{\G_T}).
\end{align*}
We combine the above two inequalities to obtain
\[
\|\nabla\co_{T}\|^2_{\Om}+ \|\nabla_{\bt}\co_{T}\|^2_{\G} \leq C_{\alpha,\beta}(\| \nabla v\|_T \|\nabla \co_{T}\|_T + \|\nabla_{\bt} v\|_{\G_T} \|\nabla_{\bt} \co_{T} \|_{\G_T}).
\]
It then follows that,
\[
\|\nabla\co_{T}\|_{\Om}^2+ \|\nabla_{\bt}\co_{T}\|_{\G}^2 \leq C_{\alpha,\beta}(\| \nabla v\|_T^2  + \|\nabla_{\bt} v\|_{\G_T} ^2).
\]
\end{proof}

\subsection{Exponential decay of the correctors}\label{sec_exp_decay}
%We extend the definition of node neighbors \eqref{NT} to element patches defined as
%\[
%\UU^{k}(T) = \UU(\UU^{k-1}(T)),
%\]
%where $\UU^1(T)=\UU(T)$, and $k$ is the patch size.

We define the closure of the complement of  $\UU^k(T)$ as $\UU^k_c(T) := \overline{\Om\backslash\UU^k(T)}$. %and $\G_{\UU^k_c(T)}:=\UU^k_c(T)\cap\G$.
In the error analysis, we will frequently restrict functions to patches. It is helpful to consider a Lipschitz continuous cutoff function $\eta_T^k: \Om\rightarrow [0,1]$ such that
\be\label{cutoffeta}
\eta_T^k = \begin{cases}
0\quad \text{ in }\UU^k(T) \\
1\quad \text{ in }\UU^{k+1}_c(T)
\end{cases},
\eb
%In addition, we require that the cutoff function satisfies the bound
{\color{black} and satisfies}
\be\label{eta_estimate}
\|\nabla\eta_T^k\|_{L^{\infty}(\Om)}\leq C_{\eta}H^{-1}.
\eb
{\color{black}An example of such a cutoff function is $\eta_T^k\in V_H$ with nodal values 0 in $\UU^k(T)$ and 1 in $\UU^{k+1}_c(T)$.}

The following theorem states that the correctors decay exponentially fast away from the corresponding support.

%\begin{theorem}\label{thm_exp}
%Let $\lambda_{T,i}$ be the $i^{th}$ element corrector in $T\in\T$ that solves \eqref{QeqnT}. Then $\lambda_{T,i}$ decays exponentially fast away from element $T$ in the sense that the following estimate holds
%\[
%\| \nabla \lambda_{T,i} \|_{\UU^k_c(T)} + \| \nabla_{\bt} \lambda_{T,i} \|_{\G_{\UU^k_c(T)} } \leq C_{A,A_\G,k,\beta,\eta} \exp(-k) (|T|+|\G_T|),
%\]
%for $k>5$.
%\end{theorem}

\begin{theorem}\label{thm_exp}
The corrector $\co_{T}$ defined in \eqref{Qdef} decays exponentially fast away from element $T$ in the sense that the following bound holds for $k\ge 5$,
\[
\| \nabla \co_{T} \|_{\UU^k_c(T)} + \| \nabla_{\bt} \co_{T} \|_{\G\cap{\UU^k_c(T)} } \leq C_{\alpha,\beta,\gamma,\eta} \exp(-Ck) (\|\nabla v\|_T+\|\nabla_{\bt} v\|_{\G_T}),
\]
 \revv{with $\alpha$, $\beta$ from \eqref{ab} and $\gamma$ from \eqref{gamma}.} Here, $C$ is a constant independent of $k$.
\end{theorem}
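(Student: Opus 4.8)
The plan is to follow the by-now standard Gafni--M\aa lqvist--Peterseim-type argument for exponential decay of LOD correctors, carefully keeping track of the extra interface terms in the bilinear form and using Assumption~\ref{assumptionIH} as the only property of $\IH$. Fix $T\in\T$ and write $\co = \co_T$. First I would introduce the cutoff function $\eta = \eta_T^{k-1}$ from \eqref{cutoffeta}, which is identically $1$ on $\UU^k_c(T)$ and identically $0$ on $\UU^{k-1}(T)$, with $\|\nabla\eta\|_{L^\infty}\le C_\eta H^{-1}$. The goal is to show that one step of ``peeling off'' a layer of annulus contracts the energy of $\co$ on the outer region by a factor bounded away from $1$; iterating gives the $\exp(-Ck)$ bound.

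The key steps are as follows. \textbf{(1)} Since $\eta^2\co$ is \emph{not} in $V_\mr{f}$ (it need not lie in the kernel of $\IH$), I correct it: set $w = \eta^2\co - \IH(\eta^2\co)$, which belongs to $V_\mr{f}$ and is supported (up to the $\IH$-enlargement, i.e.\ within a fixed number of layers) outside $\UU^{k-2}(T)$, hence away from $T$. \textbf{(2)} Using $w$ as test function in \eqref{Qdef}: because $\supp(a_T(v,\cdot))\subset T$ is disjoint from $\supp(w)$ (for $k\ge 5$ there are enough layers of separation even after accounting for the $\UU(\cdot)$-enlargement in the error/stability bounds), the right-hand side vanishes, so $a(\co,w)=0$. \textbf{(3)} Expand $a(\co,\eta^2\co)$: writing $a(\co,\eta^2\co) = a(\co,w) + a(\co,\IH(\eta^2\co)) = a(\co,\IH(\eta^2\co))$, and on the left use the product rule $\nabla(\eta^2\co) = \eta^2\nabla\co + 2\eta\co\nabla\eta$ (and the analogous tangential identity on $\G$) to isolate $\|\eta\nabla\co\|_\Om^2 + \|\eta\nabla_{\bt}\co\|_\G^2$, which controls the energy on $\UU^k_c(T)$ from below. \textbf{(4)} Estimate all the remaining ``cross'' terms — those involving $\nabla\eta$ and the term $a(\co,\IH(\eta^2\co))$ — on the annulus $R := \UU^{k+1}(T)\setminus\UU^{k-2}(T)$ where $\nabla\eta\ne 0$. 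For the $\IH$ term I would use coercivity, Cauchy--Schwarz, the stability bound \eqref{IH_stability} applied element-wise, and then the approximation bound \eqref{IH_error} to trade the $H^{-1}$ from $\nabla\eta$ against the $H$ gained from $\|\eta^2\co - $ its average$\|$, so that everything is bounded by $C(\alpha,\beta,\gamma,\eta)$ times the energy of $\co$ on the annulus $R$, i.e.\ by $\|\nabla\co\|_R^2 + \|\nabla_{\bt}\co\|_{\G\cap R}^2$. \textbf{(5)} This yields, with $\theta = \UU^k_c(T)$ and annulus $R$,
\[
\|\nabla\co\|_\theta^2 + \|\nabla_{\bt}\co\|_{\G\cap\theta}^2 \le C\big(\|\nabla\co\|_R^2 + \|\nabla_{\bt}\co\|_{\G\cap R}^2\big),
\]
and since $R \subset \UU^{k-3}_c(T)$ while the quantity $b(m):=\|\nabla\co\|_{\UU^m_c(T)}^2 + \|\nabla_{\bt}\co\|_{\G\cap\UU^m_c(T)}^2$ is non-increasing in $m$, writing the right-hand side as $b(k-3)-b(k+1)$ gives $b(k) \le \tfrac{C}{C+1}\,b(k-3)$, hence decay by a fixed factor every three layers. \textbf{(6)} Iterate this $\lfloor k/3\rfloor$ times down to $b(\text{const})$, bound $b$ at the bottom level by $\vertiii{\co}^2$, and finish with Lemma~\ref{lemma_lambda_stability} to replace $\vertiii{\co}^2$ by $C_{\alpha,\beta}(\|\nabla v\|_T^2 + \|\nabla_{\bt}v\|_{\G_T}^2)$; taking square roots produces the stated bound with $\exp(-Ck)$ and absorbs constants into $C_{\alpha,\beta,\gamma,\eta}$.

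The main obstacle — and the place where the interface genuinely enters — is step~(4), specifically controlling $\IH(\eta^2\co)$ and the tangential cross terms simultaneously on $\G$ and in $\Om$. One has to be careful that the localized support of $w$, after applying $\IH$, still stays clear of $T$ (this is exactly why $k\ge 5$ is required: the error and stability bounds \eqref{IH_error}--\eqref{IH_stability} are stated over the enlarged patch $\UU(T)$, so each application of $\IH$ costs one layer, and the cutoff plus correction together consume a bounded but nonzero number of layers), and that the constant $C$ from Assumption~\ref{assumptionIH} — which already bundles the $\G$ and $\Om$ contributions together — propagates cleanly. The tangential part $\|\nabla_{\bt}\eta\|_{L^\infty(\G)}\le\|\nabla\eta\|_{L^\infty(\Om)}\le C_\eta H^{-1}$ so the same $H^{-1}$-versus-$H$ bookkeeping works verbatim on $\G$; the one subtlety is that $\G$ may meet an element $T'$ without $\IH$ ``seeing'' a fracture integration domain there, but the combined bound in \eqref{IH_error} already accounts for both possibilities, so no separate case analysis is needed. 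Everything else is routine Cauchy--Schwarz and summation over elements in the annulus, using shape-regularity \eqref{gamma} to bound the finite overlap of the patches $\UU(T')$.
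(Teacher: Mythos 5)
Your proposal is correct and follows essentially the same route as the paper: a cutoff function localized to an annulus, correction by $\IH$ to obtain a test function in $V_\mr{f}$ whose support misses $T$ (killing the right-hand side of \eqref{Qdef}), annulus estimates for the cross terms that trade the $H^{-1}$ from $\nabla\eta$ against the $H$ from \eqref{IH_error}, an iterated fixed-factor contraction, and a final appeal to Lemma~\ref{lemma_lambda_stability}. The only differences are cosmetic (you test with $\eta^2\co$ where the paper uses $\theta_T\co_T$ and splits $\theta_T\nabla\co_T$ into three terms $M_1,M_2,M_3$, and your layer bookkeeping contracts every three layers versus the paper's five).
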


\begin{proof}
We define a related cutoff function $\theta_T = \eta_T^{k-3}$, and note that $\theta_T$ and $\nabla\theta_T$ have support
%\[supp(\theta_T)=\UU^{k-3}_c(T) \text{ and } supp(\nabla\theta_T)=\UU^{k-2}(T)\backslash\UU^{k-3}(T)=\ring^{k-2}_{k-3}(T).\]
\[\supp(\theta_T)=\UU^{k-3}_c(T) \text{ and } \supp(\nabla\theta_T)=\overline{\UU^{k-2}(T)\backslash\UU^{k-3}(T)}.\]
For convenience, we define $\ring^m_l(T):=\overline{U^m(T)\backslash U^l(T)}$ for integers $m>l>0$, and write $\supp(\nabla\theta_T)=\ring^{k-2}_{k-3}(T)$, \rev{which is a ring-shaped domain. As will be seen later, the cutoff function  $\theta_T$ has the desired support region to derive the expoenetial decay of correctors. }
%From here to the end of the proof, we simplify notations by writing $\co_{T}$ and $\theta_T$ as $\co$ and $\theta$, respectively, when there is no confusion to drop the subscripts.
We have
\begin{align}
\| \nabla \co_T \|_{\UU^k_c(T)}^2 + \| \nabla_{\bt} \co_T \|_{\G\cap{\UU^k_c(T)} }^2 \leq & C_{\alpha}(\| A^{1/2}\nabla \co_T \|_{\UU^k_c(T)}^2 + \| A_\G^{1/2}\nabla_{\bt} \co_T \|_{\G\cap{\UU^k_c(T)} }^2) \nonumber\\
\leq & C_{\alpha}((A\nabla\co_T, \theta_T\nabla\co_T)_{\Om}+(A_{\G}\nabla_{\bt}\co_T, \theta_T\nabla_{\bt}\co_T)_{\G}).\nonumber
\end{align}
The last inequality can be justified by $\theta_T=1$ in $\UU^k_c(T)$, and $\theta_T\geq 0$ in $\UU^k(T)$. By using
\begin{align*}
\theta_T \nabla \co_T &= \nabla (\theta_T \co_T) - \co_T  \nabla \theta_T \\
& = \nabla (1-\IH) (\theta_T \co_T) + \nabla \IH (\theta_T \co_T)  - \co_T  \nabla \theta_T,
\end{align*}
we obtain
\[
\| \nabla \co_T \|_{\UU^k_c(T)}^2 + \| \nabla_{\bt} \co_T \|_{\G\cap{\UU^k_c(T)} }^2 \leq C_{\alpha}(|M_1|+|M_2|+|M_3|),
\]
where
\begin{align*}
M_1 &=  (A\nabla\co_T, \nabla (1-\IH) (\theta_T \co_T))_{\Om}+ (A_{\G}\nabla_{\bt}\co_T, \nabla_{\bt} (1-\IH) (\theta_T \co_T))_{\G}, \\
M_2 &=  (A\nabla\co_T, \nabla \IH (\theta_T \co_T))_{\Om}+(A_{\G}\nabla_{\bt}\co_T, \nabla_{\bt} \IH (\theta_T \co_T) )_{\G}, \\
M_3&=  (A\nabla\co_T, \co_T  \nabla \theta_T)_{\Om}+(A_{\G}\nabla_{\bt}\co_T, \co_T  \nabla_{\bt} \theta_T)_{\G}.
\end{align*}

In what follows, we derive bounds for $M_1$, $M_2$ and $M_3$ separately.
\begin{enumerate}
\item[$M_1:$] We note that $ (1-\IH) (\theta_T \co_T)\in V_\mr{f}$ because $\IH  (1-\IH) (\theta_T \co_T)=   \IH (\theta_T \co_T)-\IH \IH (\theta_T \co_T)=0$. As a consequence,  we obtain
\[
M_1 = a(\co_T, w)=  \int_T A\nabla v\cdot\nabla w + \int_{\G_T} A_{\G} \nabla_{\bt} v \nabla_{\bt} w,
\]
where $w=(1-\IH) (\theta_T \co_T)$. \rev{Since $\supp(\theta_T \co_T)=\UU^{k-3}_c(T)$, we have $\supp(\IH\theta_T \co_T)=\UU^{k-4}_c(T)$ because the interpolation operator uses neighboring elements. Consequently,} $\supp(w)=\UU^{k-4}_c(T)$ and  $\supp(w)\cap T=\emp$. Hence, we have $M_1=0$.
\item[$M_2:$] In the region where the cutoff function $\theta_T$ is constant, we have $\IH (\theta_T\co_T)=0$ for $\co_T\in V_\mr{f}$. From $\supp(\nabla\theta_T)=\ring^{k-2}_{k-3}(T)$, we have $\supp(\nabla \IH (\theta_T \co_T)) = \ring^{k-1}_{k-4}(T)$, because the interpolation operator uses neighboring elements. With the notation $\G_{l}^{m}(T)=\G\cap{\ring^{m}_{l}}(T)$, and by Cauchy--Schwarz inequality, we have
\begin{align*}
M_2 =\ &  (A\nabla\co_T, \nabla \IH (\theta_T \co_T))_{\ring^{k-1}_{k-4}(T)}+(A_{\G}\nabla_{\bt}\co_T, \nabla_{\bt} \IH (\theta_T \co_T) )_{\G_{k-4}^{k-1}(T)} \\
\leq \ &\|A\nabla\co_T\|_{ \ring^{k-1}_{k-4}(T)} \|\nabla \IH (\theta_T \co_T)\|_{ \ring^{k-1}_{k-4}(T)} \\
&+\|A_{\G}\nabla_{\bt}\co_T\|_{\G_{k-4}^{k-1}(T)} \|\nabla_{\bt} \IH (\theta_T \co_T)\|_{\G_{k-4}^{k-1}(T)}\\
\leq \ &\frac{1}{2}\left(\|A\nabla\co_T\|_{ \ring^{k-1}_{k-4}(T)}^2+\|A_{\G}\nabla_{\bt}\co_T\|_{\G_{k-4}^{k-1}(T)}^2 \right.\\
&\ \left.+\|\nabla \IH (\theta_T \co_T)\|_{ \ring^{k-1}_{k-4}(T)}^2+ \|\nabla_{\bt} \IH (\theta_T \co_T)\|_{\G_{k-4}^{k-1}(T)}^2\right).
\end{align*}
Next, we use the $H^1$ stability property  \eqref{IH_stability} to obtain
\begin{align*}
\|\nabla \IH (\theta_T &\co_T)\|_{ \ring^{k-1}_{k-4}(T)}^2+ \|\nabla_{\bt} \IH (\theta_T \co_T)\|_{\G_{k-4}^{k-1}(T)}^2\\
& \leq C_\gamma\left(\|\nabla (\theta_T \co_T)\|_{\ring^k_{k-5}(T)}^2+ \|\nabla_{\bt}  (\theta_T \co_T)\|_{\G_{k-5}^{k}(T)}^2\right) \\
& \leq C_\gamma\left( \|\theta_T\nabla \co_T\|_{\ring^k_{k-5}(T)}^2+
\|\co_T\nabla \theta_T \|_{\ring^{k-2}_{k-3}(T)}^2\right. \\
&\ \ \ +\left.\|\theta_T\nabla_{\bt} \co_T\|_{\G_{k-5}^{k}(T)}^2+
\|\co_T\nabla_{\bt}  \theta_T \|_{\G_{k-3}^{k-2}(T)}^2\right)\\
&\leq C_{\gamma,\eta}\left(\|\nabla \co_T\|_{\ring^k_{k-5}(T)}^2+
\|\nabla_{\bt} \co_T\|_{\G_{k-5}^{k}(T)}^2\right. \\
&\ \ \ +\left. H^{-2}\left(\|\co_T \|_{\ring^{k-2}_{k-3}(T)}^2+
\|\co_T\|_{\G_{k-3}^{k-2}(T)}^2\right)\right).
\end{align*}
\rev{In the last step, we have used $\|\theta_T\|_{L^{\infty}(\Omega)}\leq 1$, $\|\nabla\theta_T\|_{L^{\infty}(\Omega)}\leq C_\eta H^{-1}$ and $\|\nabla_{\bt}\theta_T\|_{L^{\infty}(\Omega)}\leq C_\eta H^{-1}$.}
\rev{Then, using that $\IH \co_T = 0$}, the error bound \eqref{IH_error} gives
\begin{align*}
 H^{-2}\left(\right.\|\co_T &\|_{\ring^{k-2}_{k-3}(T)}^2+ \left.\|\co_T\|_{\G_{k-3}^{k-2}(T)}^2\right) \\
&=  H^{-2}\left(\|\co_T-\IH \co_T \|_{\ring^{k-2}_{k-3}(T)}^2+
\|\co_T-\IH \co_T\|_{\G_{k-3}^{k-2}(T)}^2\right) \\
&\leq  C_{\gamma}\left(\|\nabla\co_T\|_{\ring^{k}_{k-5}(T)}^2+
\|\nabla_{\bt}\co_T\|_{\G_{k-5}^{k}(T)}^2\right).
\end{align*}

By combining the bounds above, we get
\[
M_2 \leq C_{\alpha,\gamma,\eta} \left(\|\nabla\co_T\|_{\ring^{k}_{k-5}(T)}^2 + \|\nabla_{\bt}\co_T\|_{\G_{k-5}^{k}(T)}^2\right).
\]
\item[$M_3:$] In the derivation of the bound for $M_2$, we have already derived a bound for $M_3$ such that
\[
M_3 \leq C_{\alpha,\gamma,\eta} \left(\|\nabla\co_T\|_{\ring^{k}_{k-5}(T)}^2 + \|\nabla_{\bt}\co_T\|_{\G_{k-5}^{k}(T)}^2\right).
\]
\end{enumerate}

With the bounds of $M_1$, $M_2$ and $M_3$, we have
\be\label{est_kR}
\| \nabla \co_T \|_{\UU^k_c(T)}^2 + \| \nabla_{\bt} \co_T \|_{\G\cap{\UU^k_c(T)} }^2 \leq C_{\alpha,\gamma,\eta}\left(\|\nabla\co_T\|_{\ring^{k}_{k-5}(T)}^2 + \|\nabla_{\bt}\co_T\|_{\G_{k-5}^{k}(T)}^2\right).
\eb
To see that the left-hand side of \eqref{est_kR} decays exponentially fast, we use the relation between patches,
\[
\UU^k_c(T) \cup \ring^{k}_{k-5}(T) = \UU^{k-5}_c(T),
\]
and rewrite \eqref{est_kR} to
\be\label{est_kR2}
\begin{split}
\| \nabla \co_T \|_{\UU^k_c(T)}^2 &+ \| \nabla_{\bt} \co_T \|_{\G\cap{\UU^k_c(T)} }^2 \\
\leq &\left(1+C_{\alpha,\gamma,\eta}^{-1}\right)^{-1}\left(\|\nabla\co_T\|_{\UU^{k-5}_c(T)}^2 + \|\nabla_{\bt}\co_T\|_{\G\cap{\UU^{k-5}_c(T)}}^2\right).
\end{split}
\eb
The inequality \eqref{est_kR2} can be repeated so that
\be\label{est_kR3}
\begin{split}
\| \nabla \co_T \|_{\UU^k_c(T)}^2& + \| \nabla_{\bt} \co_T \|_{\G\cap{\UU^k_c(T)} }^2 \\
\leq& \left(1+C_{\alpha,\gamma,\eta}^{-1}\right)^{-k_5}\left(\|\nabla\co_T\|_{\Om}^2 + \|\nabla_{\bt}\co_T\|_{\G}^2\right),
\end{split}
\eb
where $k_5$ is the largest integer smaller or equal to $k/5$. We apply Lemma \ref{lemma_lambda_stability} to the right-hand side of \eqref{est_kR3}, and obtain
\[
\| \nabla \co_T \|_{\UU^k_c(T)}^2 + \| \nabla_{\bt} \co_T \|_{\G\cap{\UU^k_c(T)} }^2 \leq \left(1+C_{\alpha,\beta,\gamma,\eta}^{-1}\right)^{-k_5}\left(\|\nabla v\|^2_T+\|\nabla_{\bt} v\|^2_{\G_T}\right).
\]
Setting $\left(1+C_{\alpha,\beta,\gamma,\eta}^{-1}\right)^{-k_5} = \exp(-2Ck)$ for some constant $C$, we see the exponential decay property with $Ck = k_5/2 \log \left(1+C_{\alpha,\beta,\gamma,\eta}^{-1}\right)$.
\end{proof}

\subsection{Local and global truncation error analysis}

The exponential decay of $\co_T$ motivates using a localized version $\co_T^k$ to reduce the computational cost. We now analyze the error $\co_{T}-\co_{T}^k$ due to localization.

\begin{theorem}\label{thm_lte}
The local truncation error $\co_{T}-\co_{T}^k$ can be bounded as
\[%\label{lte}
\|\nabla(\co_{T}-\co_{T}^k)\|_{\Om}+\|\nabla_{\bt}(\co_{T}-\co_{T}^k)\|_{\G} \leq C_{\alpha,\beta,\gamma,\eta} \exp(-Ck) (\|\nabla v\|_T+\|\nabla_{\bt} v\|_{\G_{T}}),
\]
for any $T\in\T$ \rev{and $k\geq 7$}.
\end{theorem}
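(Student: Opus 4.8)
The plan is to use the best-approximation property \eqref{BestAppLambda}: it suffices to exhibit a single competitor $v_*\in V_\mr{f}(\UU^k(T))$ with $\vertiii{\co_T-v_*}$ exponentially small in $k$, since then $\vertiii{\co_T-\co_T^k}\le\vertiii{\co_T-v_*}$. The competitor is a truncation of $\co_T$. Take the cutoff $\tilde\eta=1-\eta_T^{k-2}$, so that $\tilde\eta=1$ on $\UU^{k-2}(T)$, $\tilde\eta=0$ on $\UU^{k-1}_c(T)$, and $\|\nabla\tilde\eta\|_{L^\infty(\Om)}+\|\nabla_{\bt}\tilde\eta\|_{L^\infty(\Om)}\le C_\eta H^{-1}$ by \eqref{eta_estimate}; set $v_*=(1-\IH)(\tilde\eta\co_T)$. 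Since $\IH$ reproduces $V_H$ we have $\IH v_*=0$, so $v_*\in V_\mr{f}$; and since $\tilde\eta\co_T$ is supported in $\UU^{k-1}(T)$ and $\IH$ only widens supports by one layer of elements, $\supp(v_*)\subset\UU^k(T)$, so $v_*$ is admissible in \eqref{BestAppLambda}.

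The point of this choice is the identity, obtained using $\IH\co_T=0$,
\[
\co_T-v_*=(1-\tilde\eta)\co_T+\IH(\tilde\eta\co_T)=\eta_T^{k-2}\co_T-\IH(\eta_T^{k-2}\co_T)=(1-\IH)\big(\eta_T^{k-2}\co_T\big).
\]
Writing $w=\eta_T^{k-2}\co_T$, boundedness of $a$ gives $\vertiii{\co_T-\co_T^k}^2\le\vertiii{(1-\IH)w}^2\le C_\beta(\|\nabla(1-\IH)w\|_\Om^2+\|\nabla_{\bt}(1-\IH)w\|_\G^2)$, and the triangle inequality together with the stability bound \eqref{IH_stability} summed over $\T$ (bounded overlap) reduces this to $C_{\beta,\gamma}(\|\nabla w\|_\Om^2+\|\nabla_{\bt}w\|_\G^2)$. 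By the product rule $\nabla w=\eta_T^{k-2}\nabla\co_T+\co_T\nabla\eta_T^{k-2}$ and similarly on $\G$, so each norm splits into a \emph{tail} part supported in $\UU^{k-2}_c(T)$ and a \emph{ring} part supported in $\ring^{k-1}_{k-2}(T)$; using $\|\tilde\eta\|_{L^\infty(\Om)}\le1$ and \eqref{eta_estimate}, the tail part is bounded by $\|\nabla\co_T\|_{\UU^{k-2}_c(T)}^2+\|\nabla_{\bt}\co_T\|_{\G\cap\UU^{k-2}_c(T)}^2$ and the ring part by $C_\eta^2H^{-2}(\|\co_T\|_{\ring^{k-1}_{k-2}(T)}^2+\|\co_T\|_{\G\cap\ring^{k-1}_{k-2}(T)}^2)$.

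It remains to see both are exponentially small. The tail part is bounded directly by Theorem~\ref{thm_exp} with patch radius $k-2$: $\|\nabla\co_T\|_{\UU^{k-2}_c(T)}+\|\nabla_{\bt}\co_T\|_{\G\cap\UU^{k-2}_c(T)}\le C_{\alpha,\beta,\gamma,\eta}\exp(-C(k-2))(\|\nabla v\|_T+\|\nabla_{\bt}v\|_{\G_T})$. For the ring part, $\IH\co_T=0$ lets the error bound \eqref{IH_error} absorb the factor $H^{-1}$: $H^{-1}\|\co_T\|_{\ring^{k-1}_{k-2}(T)}=H^{-1}\|\co_T-\IH\co_T\|_{\ring^{k-1}_{k-2}(T)}\le C_\gamma(\|\nabla\co_T\|_{\UU^{k-3}_c(T)}+\|\nabla_{\bt}\co_T\|_{\G\cap\UU^{k-3}_c(T)})$, again exponentially small by Theorem~\ref{thm_exp}; the interface ring term is handled identically. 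Since the constant in $\exp(-Ck)$ absorbs the fixed shift of the patch radius, combining everything gives $\vertiii{\co_T-\co_T^k}^2\le C_{\alpha,\beta,\gamma,\eta}\exp(-2Ck)(\|\nabla v\|_T^2+\|\nabla_{\bt}v\|_{\G_T}^2)$, and coercivity $\vertiii{z}^2\ge C_\alpha(\|\nabla z\|_\Om^2+\|\nabla_{\bt}z\|_\G^2)$ yields the claimed estimate after taking square roots. The restriction $k\ge7$ is what the index bookkeeping forces: the cutoff must sit deep enough inside $\UU^k(T)$ that $v_*$ still localizes there after $\IH$ widens supports, yet enough interior layers must remain in the tail and ring estimates for Theorem~\ref{thm_exp} (valid only for radius at least $5$) to apply. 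This bookkeeping, rather than any analytic difficulty, is the main thing to get right; the tangential terms on $\G$ require no separate argument because $\IH$ and the cutoff behave on $\G$ exactly as on $\Om$.
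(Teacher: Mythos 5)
Your proof follows essentially the same route as the paper's: both take a competitor of the form $(1-\IH)(\chi\co_T)$ in the best-approximation property \eqref{BestAppLambda}, reduce the error to $(1-\IH)(\eta\,\co_T)$ using $\IH\co_T=0$, split with the product rule into a tail term and a ring term, absorb the $H^{-1}$ on the ring term via the interpolation error bound \eqref{IH_error}, and conclude with Theorem~\ref{thm_exp}. The only difference is your cutoff index ($\eta_T^{k-2}$ versus the paper's $\eta_T^{k-1}$), which actually makes the admissibility claim $\supp(v_*)\subset\UU^k(T)$ cleaner than in the paper, but pushes your ring estimate out to $\UU^{k-3}_c(T)$ so that Theorem~\ref{thm_exp} strictly requires $k\ge 8$ rather than the stated $k\ge 7$ --- a harmless one-layer bookkeeping shift absorbed by the constant.
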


\begin{proof}
We introduce a new cutoff function
\[
\chi =1-\eta_T^{k-1}= \begin{cases}
0\quad \text{ in }\UU^k_c(T) \\
1\quad \text{ in }\UU^{k-1}(T)
\end{cases},
\]
where $\eta_T^k$ is defined in \eqref{cutoffeta}. We use the best approximation estimate \eqref{BestAppLambda}  with $v=(1-\IH)(\chi \co_{T})\in \rev{V_\mr{f}(\UU^k(T))}$, and obtain
\begin{align*}
\vertiii{\co_{T}-\co_{T}^k}&\leq \vertiii{\co_{T}-(1-\IH)(\chi \co_{T})} \\
&= \vertiii{(1-\IH)\co_{T}-(1-\IH)(\chi \co_{T})} \\
&= \vertiii{(1-\IH)(1-\chi)\co_{T}} \\
&= \vertiii{(1-\IH)(\eta_T^{k-1}\co_{T})}. \\
\end{align*}
The $H^1$ stability of the interpolation operator $\IH$ leads to
\begin{align*}
\vertiii{\co_{T}-\co_{T}^k}&\leq C_{\alpha,\beta,\gamma}\vertiii{\eta_T^{k-1}\co_{T}} \\
&\leq C_{\alpha,\beta,\gamma}\left(\|A^{1/2} \nabla(\eta_T^{k-1}\co_{T})\|_{\Om} + \|A_{\G}^{1/2} \nabla_{\bt}(\eta_T^{k-1}\co_{T})\|_\G\right)\\
&\leq C_{\alpha,\beta,\gamma}\left(\|A^{1/2} \eta_T^{k-1} \nabla \co_{T}\|_{\Om} + \|A_{\G}^{1/2} \eta_T^{k-1} \nabla_{\bt} \co_{T}\|_\G\right) \\
& \quad+C_{\alpha,\beta,\gamma}\left(\|A^{1/2} \co_{T} \nabla \eta_T^{k-1} \|_{\Om}  + \|A_{\G}^{1/2} \co_{T} \nabla_{\bt} \eta_T^{k-1}\|_\G\right).
\end{align*}

By using the chain rule, the properties of the cutoff function $\supp(\eta_T^{k-1})=\UU^{k-1}_c(T)$, and $\supp(\nabla \eta_T^{k-1})=\mathcal{R}^k_{k-1}(T)$, we have
\begin{align*}
\|A^{1/2} \eta_T^{k-1} &\nabla \co_{T}  \|_{\Om}+ \|A_{\G}^{1/2} \eta_T^{k-1} \nabla_{\bt} \co_{T}\|_\G \\
\leq & C_{\beta,\gamma} \left(\|\nabla \co_{T} \|_{\UU^{k-1}_c(T)}+\|\nabla_{\bt} \co_{T} \|_{\G\cap{\UU^{k-1}_c(T)}}\right).
\end{align*}
In addition, by the error bound \eqref{IH_error}, we have
\begin{align*}
\|A^{1/2} \co_{T} &\nabla\eta_T^{k-1} \|_{\Om}  + \|A_{\G}^{1/2} \co_{T} \nabla_{\bt} \eta_T^{k-1}\|_\G \\
\leq & C_{\beta,\gamma,\eta}  H^{-1}  \left(\|\co_{T}  \|_{\mathcal{R}^k_{k-1}(T)}  + \|\co_{T} \|_{\G^k_{k-1}(T)}\right)\\
\leq & C_{\beta,\gamma,\eta}  H^{-1}  \left(\|\co_{T}-\IH(\co_{T})  \|_{\mathcal{R}^k_{k-1}(T)}  + \|\co_{T}-\IH(\co_{T}) \|_{\G^k_{k-1}(T)}\right)\\
\leq & C_{\beta,\gamma,\eta} \left(\|\nabla\co_{T}  \|_{\mathcal{R}^{k+1}_{k-2}(T)}  + \|\nabla_{\bt}\co_{T} \|_{\G^{k+1}_{k-2}(T)}\right).
\end{align*}
Combining the two bounds above, we obtain
\[
\vertiii{\co_{T}-\co_{T}^k}\leq C_{\alpha,\beta,\gamma,\eta}\left( \|\nabla\co_{T} \|_{\UU^{k-2}_c(T)}+\|\nabla_{\bt}\co_{T} \|_{\G\cap{\UU^{k-2}_c(T)}}\right).
\]
Together with the exponential decay property of $\co_{T}$ in Theorem \ref{thm_exp} \rev{with $k-2\geq 5$}, we obtain the desired bound for the local truncation error.
\end{proof}

With the bound for the local truncation error, we  proceed to derive an error bound for the correction operator applied to any function in $V_H$, which is referred to as the global truncation error.
\begin{theorem}\label{gte}
For any $v\in V_H$, the global truncation error $Qv-Q_k v$, \revv{with $Qv$ from \eqref{Qv} and $Q_k v$ from \eqref{Qkv},} can be bounded as
\[
\|\nabla (Qv- Q_k v)\|_{\Om}+\|\nabla_{\bt}(Qv-Q_k v)\|_{\G} \leq C_{\alpha,\beta,\gamma,\eta}  k^{1/2} \exp(-Ck)(\|\nabla v\|_{\Om}+\|\nabla_{\bt} v\|_{\G})
\]
\rev{for $k\geq 7$}.
\end{theorem}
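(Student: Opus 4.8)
The plan is to reduce the global truncation error to a sum of local truncation errors, each of which is controlled by Theorem~\ref{thm_lte}, and then to exploit the finite overlap of the patches $\UU^k(T)$ together with a Cauchy--Schwarz argument to absorb the sum over elements $T\in\T$. First I would write $Qv-Q_kv = \sum_{T\in\T}(\co_T-\co_T^k)$, and test the energy-norm quantity against an arbitrary $w\in V_\mr{f}$: using the defining relations \eqref{Qdef} and \eqref{QeqnTk}, the difference $\co_T-\co_T^k$ is $a$-orthogonal to $V_\mr{f}(\UU^k(T))$, but more usefully $\co_T-\co_T^k$ itself lies in $V_\mr{f}$, so I can evaluate $a\big(Qv-Q_kv, w\big)=\sum_T a(\co_T-\co_T^k, w)$. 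The key observation is that $a(\co_T-\co_T^k,w)$ only sees $w$ through its restriction to a neighbourhood of $\supp(\nabla\eta_T^{k-1})$; more precisely, since $\co_T^k$ is the $a_k$-projection onto $V_\mr{f}(\UU^k(T))$ and $w$ restricted to $\UU^{k-1}(T)$ can be corrected to lie in that space, the contribution of $a(\co_T-\co_T^k,w)$ is supported (in the $w$ variable) on the ring $\ring^{k+1}_{k-2}(T)$ or thereabouts. Hence each fixed point $x\in\Om$ is "hit" by at most $C_\gamma k^2$ rings $\supp(\nabla\eta_T^{k-1})$ — wait, in fact by the standard LOD counting argument it is hit by $O(k)$ such rings in the relevant annular sense after one reorganizes by distance, which is the source of the $k^{1/2}$ factor rather than $k$.

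Concretely, the main steps I would carry out are: (i) write $\vertiii{Qv-Q_kv}^2 = \sum_T a(\co_T-\co_T^k,\, Qv-Q_kv)$ and, for each $T$, use that $\co_T-\co_T^k\perp_a V_\mr{f}(\UU^k(T))$ to replace $Qv-Q_kv$ by $(1-\IH)\big((1-\chi_T)(Qv-Q_kv)\big)$ with a cutoff $\chi_T$ equal to $1$ on $\UU^{k-1}(T)$, so the test function localizes to a ring around $T$; (ii) apply Cauchy--Schwarz on each term, bound $\vertiii{\co_T-\co_T^k}$ by Theorem~\ref{thm_lte}, and bound the localized piece of $Qv-Q_kv$ on the ring by $\vertiii{Qv-Q_kv}$ restricted to $\ring^{k+1}_{k-2}(T)$, using $H^1$-stability \eqref{IH_stability} and the approximation bound \eqref{IH_error} exactly as in the proof of Theorem~\ref{thm_lte}; (iii) use the finite-overlap property $\sum_T \mathbf{1}_{\ring^{k+1}_{k-2}(T)} \le C_\gamma k$ pointwise (at most $C_\gamma k$ annuli of width $O(1)$ in mesh units overlap any given element) to get $\sum_T \vertiii{Qv-Q_kv}_{\ring^{k+1}_{k-2}(T)}^2 \le C_\gamma k\, \vertiii{Qv-Q_kv}^2$; (iv) similarly use finite overlap of the supports $T\mapsto T$ to get $\sum_T(\|\nabla v\|_T^2+\|\nabla_\bt v\|_{\G_T}^2) = \|\nabla v\|_\Om^2 + \|\nabla_\bt v\|_\G^2$; (v) combine via Cauchy--Schwarz over the index $T$ to obtain
\[
\vertiii{Qv-Q_kv}^2 \le C_{\alpha,\beta,\gamma,\eta}\exp(-Ck)\, \big(k\,\vertiii{Qv-Q_kv}^2\big)^{1/2}\big(\|\nabla v\|_\Om^2+\|\nabla_\bt v\|_\G^2\big)^{1/2},
\]
and then divide through by $\vertiii{Qv-Q_kv}$ and use coercivity/boundedness \eqref{ab} to pass between the energy norm and the $H^1$-type norm on the left, absorbing any spare factor of $k^{1/2}$ from the square roots into the $k^{1/2}$ in the statement while relabeling the constant $C$ in the exponent if needed.

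The main obstacle I expect is step (iii): getting the overlap constant to scale like $k$ (not $k^2$) so that the final bound carries only $k^{1/2}$. This requires organizing the sum over $T$ by the distance from $T$ to the fixed evaluation point — for each "shell" of elements at a given mesh-distance the rings $\ring^{k+1}_{k-2}(T)$ that cover a given point correspond to $T$ lying in an annulus of bounded width, which contains $O(k)$ shells' worth but only $O(1)$ per shell in the right counting — together with the geometric decay from Theorem~\ref{thm_lte} which makes the series $\sum_j j^{d-1}\exp(-Cj)$ (or the analogous localization-overlap series) converge; the clean way is to absorb the polynomial prefactor into the exponential at the cost of shrinking $C$, leaving exactly the $k^{1/2}$ advertised. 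A secondary technical point is making precise that the test function for each $T$ can indeed be localized to the ring $\ring^{k+1}_{k-2}(T)$ using $\chi_T$ and the interpolation operator, which is essentially a repetition of the cutoff/Scott--Zhang manipulations already performed in the proof of Theorem~\ref{thm_lte}, so I would state it and refer back rather than redo it.
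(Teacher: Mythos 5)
Your overall strategy coincides with the paper's: write $g:=Qv-Q_kv=\sum_{T\in\T}(\co_T-\co_T^k)$, localize each term of $\vertiii{g}^2=\sum_{T\in\T}a(\co_T-\co_T^k,g)$ to an annulus around $T$ by subtracting a cutoff-and-interpolate modification of $g$, apply Cauchy--Schwarz together with Theorem~\ref{thm_lte}, and recover the factor $k^{1/2}$ from the observation that each element lies in $O(k)$ of the annuli. Your steps (ii)--(v), including the pointwise overlap count $\sum_{T}\mathbf{1}_{\ring^{k+1}_{k-2}(T)}\le C_\gamma k$ and the final division by $\vertiii{g}$, are exactly the paper's argument.

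The gap is in step (i). The orthogonality $a(\co_T-\co_T^k,w)=0$ for $w\in V_\mr{f}(\UU^k(T))$ only lets you subtract from $g$ a fine function \emph{supported inside the patch}; with a one-sided cutoff $\chi_T$ (equal to $1$ on $\UU^{k-1}(T)$ and $0$ outside $\UU^{k}(T)$) the remainder $(1-\IH)((1-\chi_T)g)$ is supported on essentially all of $\UU^{k-2}_c(T)$, not on a ring, so the claimed pairing of $\vertiii{\co_T-\co_T^k}$ with the restriction of $g$ to $\ring^{k+1}_{k-2}(T)$ does not follow, and the overlap count in step (iii) would no longer apply. The paper avoids this with the two-sided cutoff $\kappa=1+\eta_T^{k+1}-\eta_T^{k-2}$, for which $1-\kappa$ is genuinely ring-supported, and it must then show separately that the far-field piece $(1-\IH)(\eta_T^{k+1}g)$ is annihilated by $a(\co_T-\co_T^k,\cdot\,)$. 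That annihilation does not come from the projection property you invoke: it uses that the right-hand sides of both corrector problems \eqref{Qdef} and \eqref{QeqnTk} are integrals over $T$ alone (hence vanish on test functions supported away from $T$) and that $\co_T^k$ has support confined to $\UU^k(T)$. Your first paragraph shows you expect the contribution to live on a ring, but the mechanism you cite does not deliver it; you need this extra far-field argument, or equivalently the two-sided cutoff, to make step (i) correct. Once that is repaired, the rest of your proposal goes through as in the paper.
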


\begin{proof}
%With notations
%\[
%\co_T := \left.\sum_{i=1}^d \f{\p v}{\p x_i}\right|_T \co_{T,i},\ \co_T^k := \left.\sum_{i=1}^d \f{\p v}{\p x_i}\right|_T \co_{T,i}^k,
%\]
We define the global truncation error to be
\[
g = Qv-Q_k v=\sum_{T\in\T} (\co_T-\co_T^k).
\]
We also define a cutoff function $\kappa = 1+\eta_T^{k+1}-\eta_T^{k-2}$. Since $(1-\IH)(\kappa g)\in V_\mr{f}$, by \eqref{Qdef} we have
\be\label{rhs1}
\begin{split}
a(\co_{T}, &(1-\IH)(\kappa g)) \\
&= \int_T A \nabla v\cdot \nabla (1-\IH)(\kappa g) + \int_{\G_T} A_{\G} (\nabla_{\bt} v) (\nabla_{\bt} (1-\IH)(\kappa g)).
\end{split}
\eb

Next, the definition of the cutoff function $\eta_T^k$ in \eqref{cutoffeta} gives
\[
1-\eta_T^{k-2} = \begin{cases}
1\quad \text{ in }\UU^{k-2}(T) \\
0\quad \text{ in }\UU^{k-1}_c(T)
\end{cases}.
\]
We note that $\supp(1-\eta_T^{k-2})=\UU^{k-1}(T)$, and $\supp(\eta_T^{k+1})=\UU^{k+1}_c(T)$.
Therefore, $(1-\IH)((1-\eta_T^{k-2})g)\in \rev{V_\mr{f}(\UU^k(T))}$. Hence,
\begin{align*}
a(\co_{T}^k,& (1-\IH)((1-\eta_T^{k-2})g)) \\
=& \int_T A \nabla v\cdot \nabla (1-\IH)((1-\eta_T^{k-2})g)  \\
&+\int_{\G_T} A_{\G} (\nabla_{\bt} v) (\nabla_{\bt} (1-\IH)((1-\eta_T^{k-2})g)).
\end{align*}
We also note that $(1-\IH)(\eta_T^{k+1} g)=0$ in $\UU^k(T)$, i.e.~it has no common support with $\co_T^k$. This leads to
\begin{align*}
a(\co_{T}^k, (1-\IH)(\eta_T^{k+1}g))&=0,
\end{align*}
and
\begin{align*}
\int_T A \nabla v\cdot \nabla (1-\IH)(\eta_T^{k+1}g) + \int_{\G_T} A_{\G} (\nabla_{\bt} v) (\nabla_{\bt} (1-\IH)(\eta_T^{k+1}g) )&=0.
\end{align*}
Consequently, we have
\be\label{rhs2}
a(\co_{T}^k, (1-\IH)(\kappa g)) =\int_T A \nabla v\cdot \nabla (1-\IH)(\kappa g) + \int_{\G_T} A_{\G} (\nabla_{\bt} v) (\nabla_{\bt} (1-\IH)(\kappa g) ).
\eb
Since the right-hand side of \eqref{rhs1} and \eqref{rhs2} are the same, we obtain
\[
a(\co_T-\co_T^k, (1-\IH)(\kappa g)) =0.
\]
The global truncation error in the energy norm can be bounded as
\begin{align*}
\vertiii{g}^2 =&\ a(g,g) \\
 =& \sum_{T\in\T} a(g, \co_T-\co_T^k) \\
= &\sum_{T\in\T} a(g-\IH g+(1-\IH)(\kappa g), \co_T-\co_T^k) \\
= &\sum_{T\in\T} a((1-\IH)(g-\kappa g), \co_T-\co_T^k) \\
\leq &\ C_{\beta}\sum_{T\in\T} \|\nabla (1-\IH)(g-\kappa g)\|_{\Om}   \|\nabla(\co_T-\co_T^k)\|_{\Om} \\
&+C_{\beta} \sum_{T\in\T} \|\nabla_{\bt} (1-\IH)(g-\kappa g)\|_{\G}  \|\nabla_{\bt}(\co_T-\co_T^k)\|_{\G}.
\end{align*}
We note that
\[
\supp(1-\kappa) = \supp(\eta_T^{k-2}-\eta_T^{k+1})=\mathcal{R}^{k+2}_{k-2}(T).
\]
Together with the $H^1$ stability of $\IH$ \eqref{IH_stability} \rev{and using that $\IH g = 0$}, we have
\begin{align*}
\|\nabla (1-&\IH)(g-\kappa g)\|_{\Om}+\|\nabla_{\bt} (1-\IH)(g-\kappa g)\|_{\G} \\
\leq & C_{\gamma}\left(\|\nabla (g-\kappa g)\|_{\Om}+\|\nabla_{\bt} (g-\kappa g)\|_{\G}\right) \\
=& C_{\gamma} \left(\|\nabla (g-\kappa g)\|_{\ring^{k+2}_{k-2}(T)}+\|\nabla_{\bt} (g-\kappa g)\|_{\G^{k+2}_{k-2}(T)}\right) \\
\leq& C_{\gamma} \left(\|g\nabla\kappa \|_{\ring^{k+2}_{k-2}(T)}+\|(1-\kappa)\nabla g \|_{\ring^{k+2}_{k-2}(T)}\right) \\
& + C_{\gamma} \left(\|g\nabla_{\bt}\kappa \|_{\G^{k+2}_{k-2}(T)}+\|(1-\kappa)\nabla_{\bt} g \|_{\G^{k+2}_{k-2}(T)}\right) \\
\leq & C_{\gamma} \left(\| \nabla g \|_{\ring^{k+2}_{k-2}(T)}+\| \nabla_{\bt} g \|_{\G^{k+2}_{k-2}(T)}\right)\\
& + C_{\gamma,\eta} H^{-1}\left(\| g-\IH g\|_{\ring^{k+2}_{k-2}(T)} +\| g-\IH g\|_{\G^{k+2}_{k-2}(T)}\right) \\
\leq & C_{\gamma,\eta} \left(\|\nabla g\|_{\ring^{k+3}_{k-3}(T)}+\|\nabla_{\bt} g\|_{\G^{k+3}_{k-3}(T)}\right).
\end{align*}
\rev{In the last inequality, we have used the interpolation error estimate \eqref{IH_error}.}
%In the last step, we have used $\|\nabla\kappa\|_{L^{\infty}(\Om)}\leq CH^{-1}$, and the interpolation error estimate such that
%\[\|\nabla g\|_{\UU^{k+2}(T)\backslash\UU^{k-2}(T)}  = \|\nabla (g-\IH g)\|_{\UU^{k+2}(T)\backslash\UU^{k-2}(T)} \leq \|\nabla g\|_{\UU^{k+3}(T)\backslash\UU^{k-3}(T)}.
%\]

%By using the local truncation error estimate in Theorem \ref{thm_lte}, we obtain
%%\begin{align*}
%% \|\nabla(\co_T-\co_T^k)\|_{\Om} &=  \left\|\left.\sum_{i=1}^d \f{\p v}{\p x_i}\right|_T\nabla(\co_{T,i}-\co_{T,i}^k)\right\|_{\Om} \\
%% &\leq \sum_{i=1}^d \left\|\left. \f{\p v}{\p x_i}\right|_T\nabla(\co_{T,i}-\co_{T,i}^k)\right\|_{\Om}\\
%% &\leq  \|\nabla v \|_T \|\nabla(\co_{T,i}-\co_{T,i}^k)\|_{\Om} \\
%% &\leq  C_{A,A_{\G},\beta,\eta}\|\nabla v \|_T \exp(-ck)(|T|+|\G_{T}|).
%%\end{align*}
%\begin{align*}
%& \|\nabla(\co_T-\co_T^k)\|_{\Om} +  \|\nabla_{\bt}(\co_T-\co_T^k)\|_{\G}\\
% =&  \left\|\left.\sum_{i=1}^d \f{\p v}{\p x_i}\right|_T\nabla(\co_{T,i}-\co_{T,i}^k)\right\|_{\Om}+\left\|\left.\sum_{i=1}^d \f{\p v}{\p x_i}\right|_T\nabla_{\bt}(\co_{T,i}-\co_{T,i}^k)\right\|_{\G} \\
%\leq & C_{\beta,\eta}\|\nabla v\|_T \sum_{i=1}^d(\|\nabla(\co_{T,i}-\co_{T,i}^k)\|_{\Om} + \|\nabla_{\bt}(\co_{T,i}-\co_{T,i}^k)\|_{\G} )\\
%\leq & C_{A,A_{\G},\beta,\eta}\|\nabla v \|_T \exp(-ck)(|T|^{1/2}+|\G_{T}|^{1/2}).
%\end{align*}
Combining the above bound with the local truncation error bound in Theorem \ref{thm_lte}, we have
\begin{align*}
\vertiii{g}^2 \leq & C_{\alpha,\beta,\gamma,\eta} \exp(-Ck)\\
&\sum_{T\in\T} \left(\|\nabla g\|_{\ring^{k+3}_{k-3}(T)}+\|\nabla_{\bt} g\|_{\G^{k+3}_{k-3}(T)} \right)(\|\nabla v \|_T+\|\nabla_{\bt} v \|_{\G_T})  \\
%\leq & C_{A,A_{\G},\beta,\eta} \exp(-ck) \|\nabla v \|_{\Om} \left( \sum_{T\in\T} \|\nabla g\|_{\UU^{k+3}(T)\backslash\UU^{k-3}(T)}^2\right)^{\frac{1}{2}}\\
\leq &  C_{\alpha,\beta,\gamma,\eta} k^{1/2} \exp(-Ck) (\|\nabla v \|_{\Om}+\|\nabla_{\bt} v \|_{\G}) \vertiii{g}.
\end{align*}
In the last step, we have used that the number of elements in $\ring^{k+3}_{k-3}$ is proportional to $k$. \rev{Too see this, we note that $\ring^{k+3}_{k-3}$ is a ring-shaped domain with an area $\sim \pi((k+3)H)^2-\pi((k-3)H)^2=12k\pi H^2$. Since the area of a single element is proportional to $H^2$, we have the number of elements proportional to $k$.} %In the same way, we can estimate the interface contribution to the global truncation error
%\[
%\sum_{T\in\T} \|\nabla_{\bt} (1-\IH)(g-\kappa g)\|_{\G} \|\nabla_{\bt}(\co_T-\co_T^k)\|_{\G} \leq C_{A,A_{\G},\beta,\eta} k^{1/2} \exp(-ck) \|\nabla_{\bt} v \|_{\G} \|\nabla_{\bt} g\|_{\G}.
%\]
 In conclusion, we have
\[
\|\nabla g\|_{\Om}+\|\nabla_{\bt}g\|_{\G} \leq C_{\alpha,\beta,\gamma,\eta} k^{1/2} \exp(-Ck)(\|\nabla v\|_{\Om}+\|\nabla_{\bt} v\|_{\G}),
\]
which completes the proof.
\end{proof}

\subsection{A priori error bound}
An error bound for the localized multiscale solution can be established using the global truncation error analysis. We summarize the result in the following theorem.
\begin{theorem}\label{thm_apriori}
The error between the weak solution of equation \eqref{weak_form} and its LOD approximation satisfies
\be\label{apriori}
\vertiii{u-u_\mr{ms}^k} \leq C_{\alpha,\beta,\gamma,\eta} (H+k^{1/2}\exp(-Ck)) (\|f\|_{\Om}+\|f_\G\|_{\G}).
\eb
\rev{for $k\geq 7$}.
\end{theorem}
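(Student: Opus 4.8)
The plan is to bound the error $\vertiii{u - u_\mathrm{ms}^k}$ by splitting it, via the triangle inequality, into the idealized discretization error $\vertiii{u - u_\mathrm{ms}}$ (using the non-localized multiscale space $V_\mathrm{ms}$) and the localization error $\vertiii{u_\mathrm{ms} - u_\mathrm{ms}^k}$. For the first term I would use the Galerkin orthogonality \eqref{GalerkinOrthogonality} together with the best approximation property: since $u - u_\mathrm{ms} \in V_\mathrm{f}$, we have $\IH(u - u_\mathrm{ms}) = 0$ as in \eqref{IH0}, and by testing the weak form one shows $\vertiii{u - u_\mathrm{ms}} = \vertiii{(1-Q)(u - \IH u)}$ or, more directly, that $u_\mathrm{ms} = (Q - I)(\IH u)$ reproduces $u$ up to its fine-scale part; then the interpolation error bound \eqref{IH_error} summed over all $T \in \T$ gives $\vertiii{u - u_\mathrm{ms}} \leq C_{\alpha,\beta,\gamma} H \, (\|\nabla u\|_\Om + \|\nabla_{\bt} u\|_\G)$, which by the a priori stability of the continuous problem is bounded by $C_{\alpha,\beta,\gamma} H (\|f\|_\Om + \|f_\G\|_\G)$.

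For the localization term, the idea is to compare the two Galerkin solutions. Write $u_\mathrm{ms} = (Q - I) z$ and $u_\mathrm{ms}^k = (Q_k - I) z_k$ for suitable $z, z_k \in V_H$; a standard argument (e.g.\ as in \cite{Malqvist2014}) shows that $u_\mathrm{ms} - u_\mathrm{ms}^k$ is controlled by how far $Q$ and $Q_k$ differ when applied to a common coarse function. Concretely, one tests the difference of the two discrete problems and uses coercivity to get $\vertiii{u_\mathrm{ms} - u_\mathrm{ms}^k} \leq C \sup_{v \in V_H} \vertiii{(Q - Q_k) v} / \vertiii{\cdot}$-type quantity; invoking Theorem~\ref{gte}, which bounds $\|\nabla(Qv - Q_k v)\|_\Om + \|\nabla_{\bt}(Qv - Q_k v)\|_\G$ by $C_{\alpha,\beta,\gamma,\eta}\, k^{1/2} \exp(-Ck)(\|\nabla v\|_\Om + \|\nabla_{\bt} v\|_\G)$, yields the $k^{1/2}\exp(-Ck)$ contribution, again multiplied by $\|f\|_\Om + \|f_\G\|_\G$ after using stability. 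Adding the two contributions gives \eqref{apriori}.

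The main obstacle I anticipate is making the localization-error estimate rigorous: one needs to carefully relate the coarse representatives $z = \IH u_\mathrm{ms}$ and $z_k = \IH u_\mathrm{ms}^k$ (or the coarse coefficients) and show that the difference $(Q - Q_k)$ applied to $z_k$, together with the quasi-orthogonality of $V_\mathrm{ms}$ and $V_\mathrm{ms}^k$ against $V_\mathrm{f}$, can be absorbed. A clean way is the argument from \cite{Malqvist2014,Peterseim2016}: write $e = u_\mathrm{ms} - u_\mathrm{ms}^k$, test the LOD problem \eqref{lod_weak_form} against an appropriate element of $V_\mathrm{ms}^k$, add and subtract $(Q-Q_k)$ terms, and use that $\IH(u - u_\mathrm{ms}^k)$ is nonzero only through localization defects. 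One must also confirm that the constant $C$ in the exponent is the same (up to relabeling) as in Theorems~\ref{thm_exp}–\ref{gte}, and that the restriction $k \geq 7$ propagates correctly. The remaining steps — coercivity, boundedness of $a(\cdot,\cdot)$, and the stability bound $\|\nabla u\|_\Om + \|\nabla_{\bt} u\|_\G \leq C_\alpha(\|f\|_\Om + \|f_\G\|_\G)$ from Lax–Milgram applied to \eqref{weak_form} — are routine.
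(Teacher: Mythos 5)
Your overall strategy matches the paper's: split the error into the idealized multiscale error and a localization contribution, handle the first via Galerkin orthogonality, $\IH(u-u_\mr{ms})=0$, and the interpolation error bound \eqref{IH_error} (which in the paper is applied to $e_{ms}-\IH e_{ms}$ tested against $f$ and $f_\G$, giving the $H$ term directly against $\|f\|_{\Om}+\|f_\G\|_{\G}$ without the detour through $\|\nabla u\|_{\Om}$ and a separate stability bound), and handle the second via Theorem~\ref{gte}. The one place where your plan is genuinely softer than the paper's proof is the localization term. You propose to bound $\vertiii{u_\mr{ms}-u_\mr{ms}^k}$ by comparing the two Galerkin solutions, and you correctly flag that relating their coarse representatives is the obstacle; as written, that step is not closed. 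The paper avoids the comparison entirely: since $u_\mr{ms}^k$ is the $a$-orthogonal projection of $u$ onto $V_\mr{ms}^k$, one has $\vertiii{u-u_\mr{ms}^k}\le\vertiii{u-v}\le\vertiii{u-u_\mr{ms}}+\vertiii{u_\mr{ms}-v}$ for \emph{any} $v\in V_\mr{ms}^k$, and the explicit choice $v=(1-Q_k)\IH u$ together with the identity $u_\mr{ms}=(1-Q)\IH u$ makes $u_\mr{ms}-v=(Q_k-Q)\IH u$, so Theorem~\ref{gte} applies verbatim with the coarse function $\IH u$; the $H^1$-stability \eqref{IH_stability} of $\IH$ and the stability of the continuous problem then produce the $k^{1/2}\exp(-Ck)(\|f\|_{\Om}+\|f_\G\|_{\G})$ term. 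If you replace your Galerkin-solution comparison with this choice of competitor, your argument becomes complete, and the constant in the exponent and the restriction $k\ge 7$ are inherited from Theorems~\ref{thm_lte} and~\ref{gte} exactly as you anticipated.
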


\begin{proof}
%To connect the localized multiscale solution $u_{ms}^k$ and the localized corrector operator $Q_k$, it is helpful to write an equivalent formulation of the LOD method \eqref{lod_weak_form} as: find $u_H\in V_H$ such that
%\be\label{lod_weak_formQ}
%a((1-Q_k)u_H, (1-Q_k)v) = F((1-Q_k)v), \forall v\in V_H.
%\eb
%Therefore, we can write $u_{ms}^k=(1-Q_k)u_H$. Applying the interpolation operator $I_H$ on both sides and using \eqref{IH0}, we obtain $u_{ms}^k=(1-Q_k)\IH u$.
The best approximation property \eqref{BestAppLambda} gives
\begin{align*}
\vertiii{u-u_\mr{ms}^k} &\leq \vertiii{u-v} \\
&\leq \vertiii{u-u_\mr{ms}+u_\mr{ms}-v}\\
&\leq \vertiii{u-u_\mr{ms}} + \vertiii{u_\mr{ms}-v}
\end{align*}
for all $v\in V_\mr{ms}^k$. The first term on the right-hand side, $e_{ms}=u-u_\mr{ms}$, is the error in the non-localized multiscale solution. By using the Galerkin orthogonality \eqref{GalerkinOrthogonality}, we obtain
\begin{align*}
a(e_{ms},e_{ms})&=a(e_{ms}, u) = F(e_{ms}) = (e_{ms},f)_{\Omega} + (e_{ms}, f_\G)_\G \\
&=(e_{ms}-\IH e_{ms},f)_{\Omega} + (e_{ms}-\IH e_{ms}, f_\G)_\G.
\end{align*}
We use the Cauchy--Schwarz inequality, and then the interpolation error bound \eqref{IH_error}, to obtain
\begin{align*}
\vertiii{e_{ms}}^2 &\leq \|e_{ms}-\IH e_{ms}\|_{\Omega}\|f\|_{\Omega}+\|e_{ms}-\IH e_{ms}\|_{\G}\|f\|_{\G} \\
&\leq C_\gamma H\|\nabla e_{ms}\|_{\Omega}\|f\|_{\Omega}+C_\gamma H\|\nabla_{\bt}e_{ms}\|_{\G}\|f\|_{\G}.
\end{align*}
Therefore, we have
\[
\vertiii{u-u_\mr{ms}} \leq C_{\beta,\gamma}H (\|f\|_{\Om}+\|f_{\G}\|_\G).
\]
To bound the second term, we pick a particular $v=(1-Q_k)\IH u\in V_\mr{ms}^k$, and  use the relation $u_\mr{ms}=(1-Q)\IH u$ to obtain
\begin{align*}
\vertiii{u_\mr{ms}-v}^2&=\vertiii{(1-Q)\IH u-(1-Q_k)\IH u}^2 \\
&=\vertiii{Q\IH u-Q_k\IH u}^2 \\
&=\|A^{1/2}\nabla(Q\IH u-Q_k\IH u)\|_{\Om}^2+\|A_{\G}^{1/2}\nabla_{\bt}(Q\IH u-Q_k\IH u)\|_{\G}^2.
\end{align*}

By using the global truncation error in Theorem \ref{gte}, we have
\[
\vertiii{u_\mr{ms}-v} \leq C_{\alpha,\beta,\gamma,\eta} k^{1/2} \exp(-ck)(\|\nabla (\IH u)\|_{\Om}+\|\nabla_{\bt} (\IH u)\|_{\G}).
\]
The $H^1$ stability of the interpolation operator $\IH$ in \eqref{IH_stability}  gives
\begin{align*}
\vertiii{u_\mr{ms}-v} &\leq C_{\alpha,\beta,\gamma,\eta} k^{1/2} \exp(-Ck)(\|\nabla u\|_{\Om}+\|\nabla_{\bt} u\|_{\G}) \\
&\leq C_{\alpha,\beta,\gamma,\eta} k^{1/2} \exp(-Ck)(\|f\|_{\Om}+\|f_\G\|_{\G}).
\end{align*}
This completes the proof.
\end{proof}
\revv{We recall from Theorem 1 that $Ck=k_5/2 \log(1+C_{\alpha,\beta,\gamma,\eta}^{-1})$. For optimal convergence and efficient computation, the patch size $k$ shall be chosen proportional to $\log(H^{-1})$. } \rev{The theorem holds only for $k \geq 7$, suggesting that there is a minimum required size of the patches for the method to be accurate. However, for the problems studied in the numerical experiments section below it was sufficient to use patch sizes in the range 1--4 to obtain accurate solutions. It is not clear if the theorem is sharp with respect to the bound on $k$ for the class of problems studied or if it can be improved.}

\section{Numerical experiments}
\label{sec:num}
%{\color{red} Experiment 1 will be kept as what it is. In experiment 2, remove discussion of stability constant, and remove the figure corresponding to stability tolerance = 10. Experiment 3 will be changed. }

%We present numerical experiments with different fracture domain configurations to demonstrate the effectiveness of the proposed method. In all experiments, a reference solution $u_h$ is computed on a sufficiently fine mesh where rapid oscillation in permeability is well-resolved. We then compute the LOD solution $u_{LOD}$ on coarse meshes with different mesh resolutions. The error is then computed as $\vertiii{u_h-u_{LOD}}$, where the energy norm  $\vertiii{\cdot}$ is defined by the scalar product \eqref{scalar_a}.

We present three numerical experiments. In the first experiment, we verify the a priori error bound derived in Theorem \ref{thm_apriori} by considering two interfaces composed of piecewise line segments. An unstructured mesh is used to align the interfaces with the element edges. In this case, the assumptions on the Scott--Zhang interpolation operators are satisfied, and Theorem \ref{thm_apriori} is valid. We then proceed with the second experiment where both intersected interfaces and immersed interfaces are present in the domain. We investigate how the accuracy of the LOD method depends on the number of layers in the patches. In the third experiment, we apply the proposed LOD method to the upscaling of the spatial discretization of the wave equation.

In Section~\ref{sec_MM}, the LOD method is described using the full space $V$. In computer implementation, we discretize $V$ to a fine scale finite element space, with a mesh size small enough so that rapid oscillation in the permeability is well-resolved. The computed solution $u_h$ in the fine scale finite element space is considered to be the reference solution. To measure the relative error in the LOD solution $u_{LOD}$, we use the formula
\be\label{rel_err}
\vertiii{u_h-u_{LOD}}_{rel} =  \frac{\vertiii{u_h-u_{LOD}}}{\vertiii{u_h}},
\eb
where the energy norm is induced from the scalar product in \eqref{scalar_a}.

In \cite{Burman2019}, a simple finite element method (SFEM) is developed for simulation of Darcy flows in fractured media, which is also applied to a coupled flow and transport problem \cite{Odsater2019}.  With the bilinear form \eqref{weak_form}, the SFEM can be written as: find $u_h\in V_h$ such that
\[
a(u_h, v) = F(v),\quad \forall v\in V_h,
\]
where the fine scale finite element space $V_h$ consists of piecewise linear functions that vanish on $\partial\Om$. In the SFEM, interfaces \revv{do not need to be aligned with the fine mesh and} may cut through the fine scale elements in an arbitrary fashion. \revv{When the variation of permeability in the bulk domain and geometry of the fracture are resolved, optimal first order} convergence in the energy norm is obtained with a locally refined mesh near interfaces; \revv{otherwise the convergence rate is 0.5 with immersed interfaces}. This is   because continuous elements are used in the entire triangulation. However, SFEM is very easy to implement, and is well-suited to test the proposed LOD method in this paper. Since $V_h\subset V$ it is straightforward to replace $V$ by $V_h$ in the analysis resulting in an error bound for $u_h-u_{LOD}$. For these reasons we use SFEM for the fine scale discretization in the following experiments. We note that the proposed LOD method is not restricted to this particular type of discretization.

\subsection{Verification of convergence rate}
We consider two interface as shown in Figure \ref{Mesh_Int}, that are union of coarse element edges. We pick this mesh as the coarsest, and refine it five times to obtain the reference mesh associated with $V_h$. The number of nodes in the coarsest and the finest meshes are 237 and 219345, respectively.  In a mesh refinement, each triangle is divided into four triangles by using the midpoints of the three edges. 
\begin{figure}
\subfloat[][Two interfaces on the element edges of an unstructured mesh.]{\includegraphics[width=0.45\textwidth]{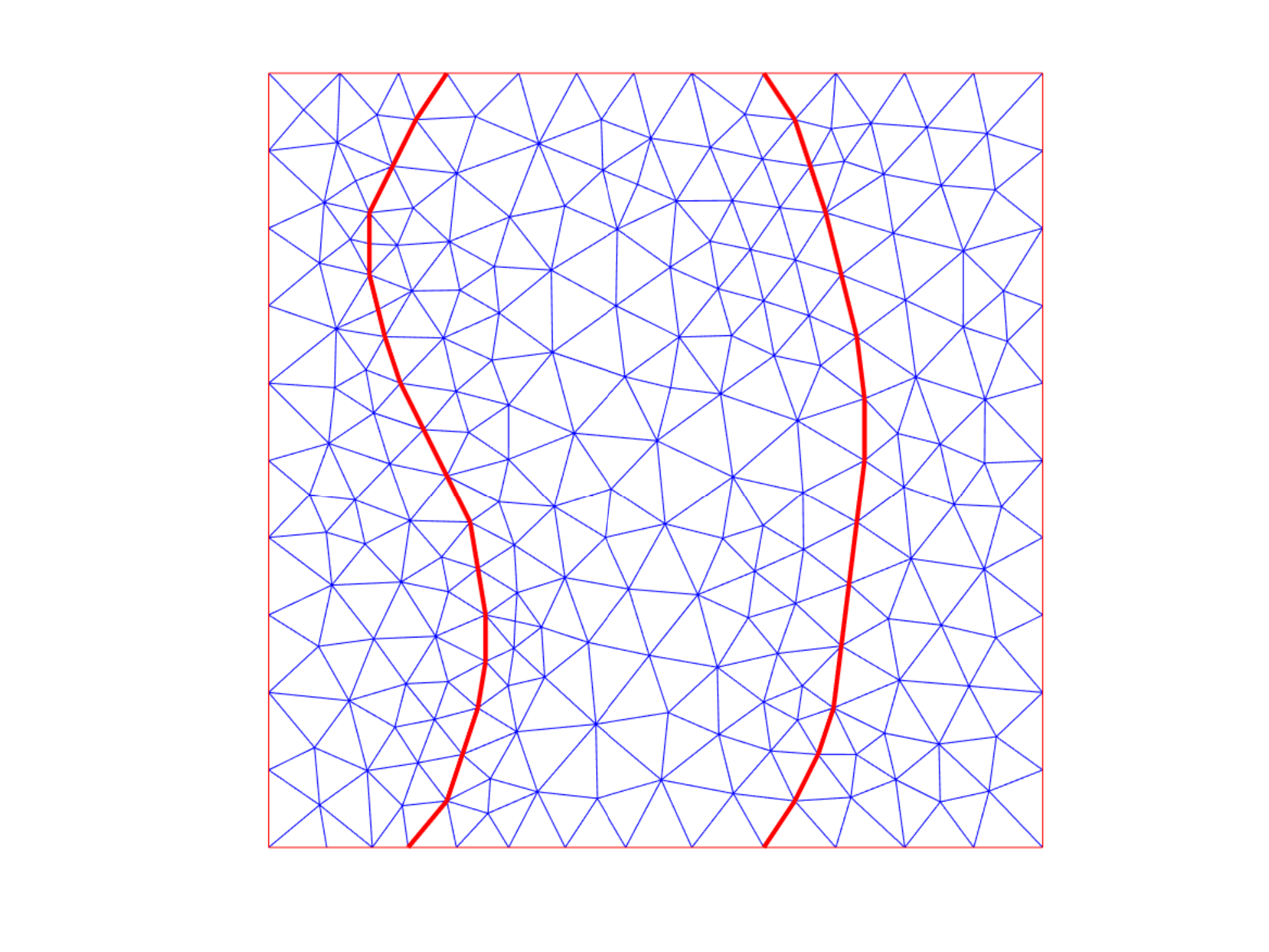}\label{Mesh_Int}}
\subfloat[][The coefficient $A$.]{\includegraphics[width=0.45\textwidth]{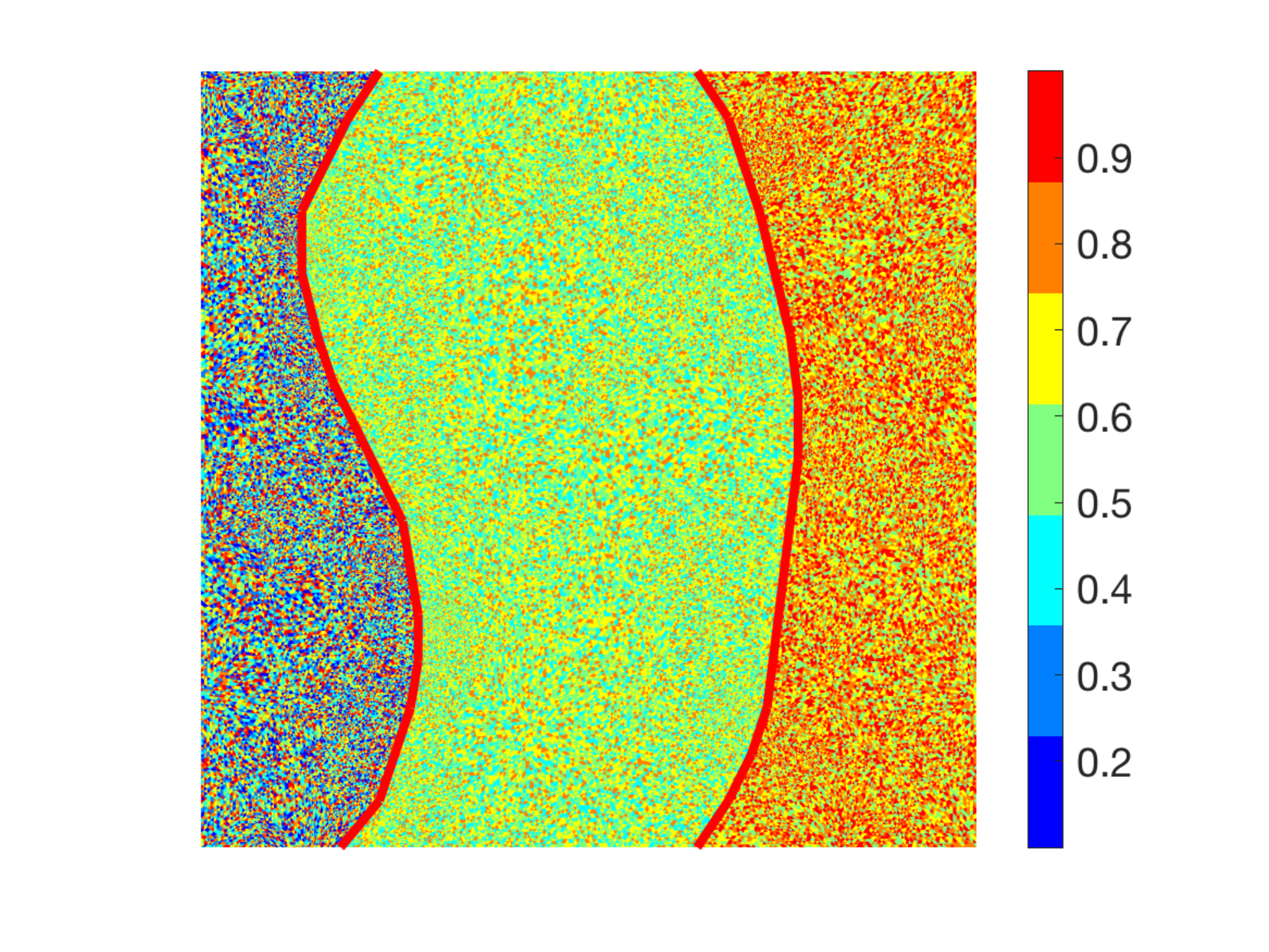}\label{UnstructuredA}}
\caption{}
\end{figure}

The permeability $A$, plotted in Figure \ref{UnstructuredA}, is a sample of a random field in [0.1,0.9] with a variation on the scale of the second finest mesh. We use different random fields for the subdomains divided by the two interfaces, modeling a layered structure of the porous media.
On the interface, the permeability $A_\G = 2$. The forcing functions are $9+\sin(x+y)$ on the left interface, and $9+\cos(x+y)$ on the right interface. In the bulk domain, the forcing function is 1 in $[0.4,0.6]^2$, and 0 elsewhere. The reference solution, computed on the finest mesh, is shown in Figure \ref{UnstructuredSol}. The effect of the two interfaces is clearly visible.

In Figure \ref{UnstructuredH1}, we plot the relative error for several mesh resolutions. The $x$-label denotes the number of mesh refinements from the coarsest mesh. We observe the convergence rate is higher than first order. In contrast, the solution by the standard finite element method does not converge when the rapid oscillation in the permeability is not resolved.

\begin{figure}
\subfloat[][Reference solution]{\includegraphics[width=0.53\textwidth]{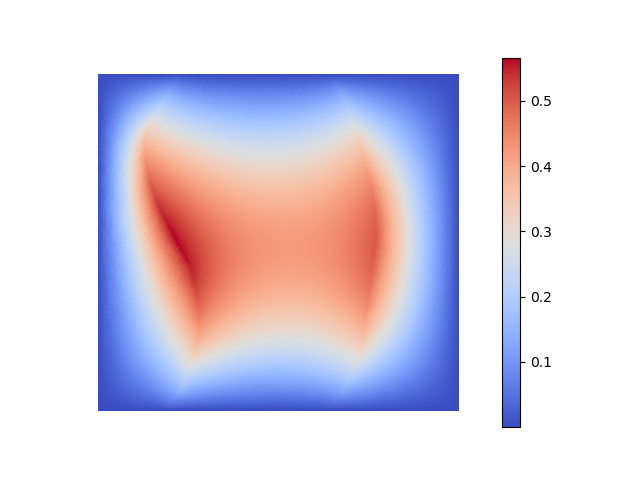}\label{UnstructuredSol}}
%\subfloat[][Error]{\includegraphics[width=0.39\textwidth]{UnstructuredH1.eps}\label{UnstructuredH1}}
%\subfloat[][Error]{\includegraphics[width=0.45\textwidth]{UnstructuredRate.eps}\label{UnstructuredH1}}
\subfloat[][Error]{\includegraphics[width=0.45\textwidth]{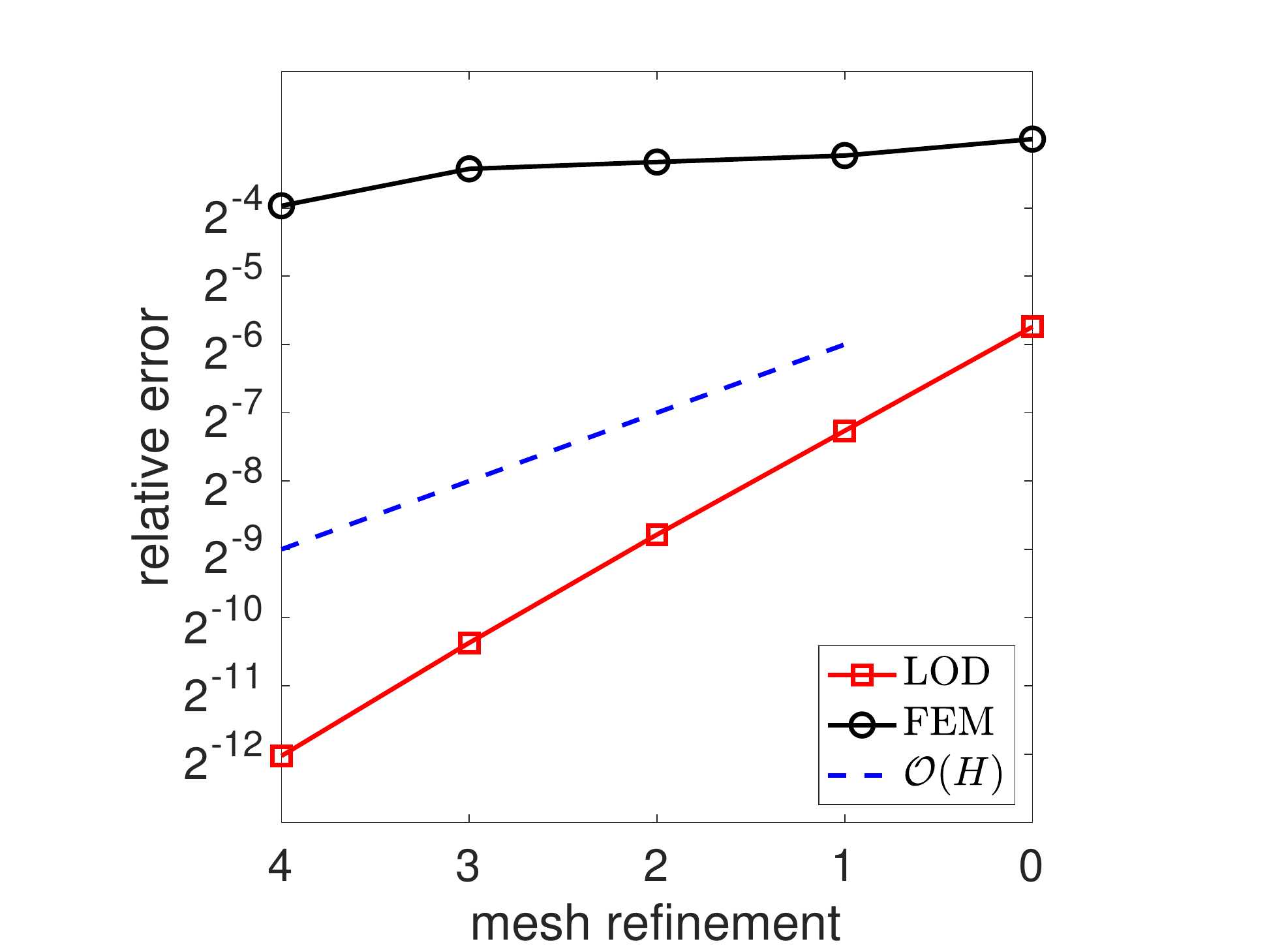}\label{UnstructuredH1}}
\caption{}
\end{figure}

\subsection{Intersected and immersed interfaces}

Next we investigate the proposed LOD method with intersected and immersed interfaces, and
interfaces that are not on the element edges in the coarse mesh. The five interfaces are shown in Figure \ref{WavesA}, together with the permeability $A$, which is piecewise constant varying on the scale  $2^{-7}$ with values sampled from a uniform distribution in $[0.1, 0.9]$. The permeability on the interfaces is 2. The forcing functions are $f=2$ and $f_\G=10$. \revv{As presented in Sec. \ref{sec_immersed}, the weak form of the governing equation takes the form \eqref{weak_form}-\eqref{Fv}.}

\begin{figure}
\centering
\subfloat[][The permeability $A$.]{\includegraphics[width=0.5\textwidth]{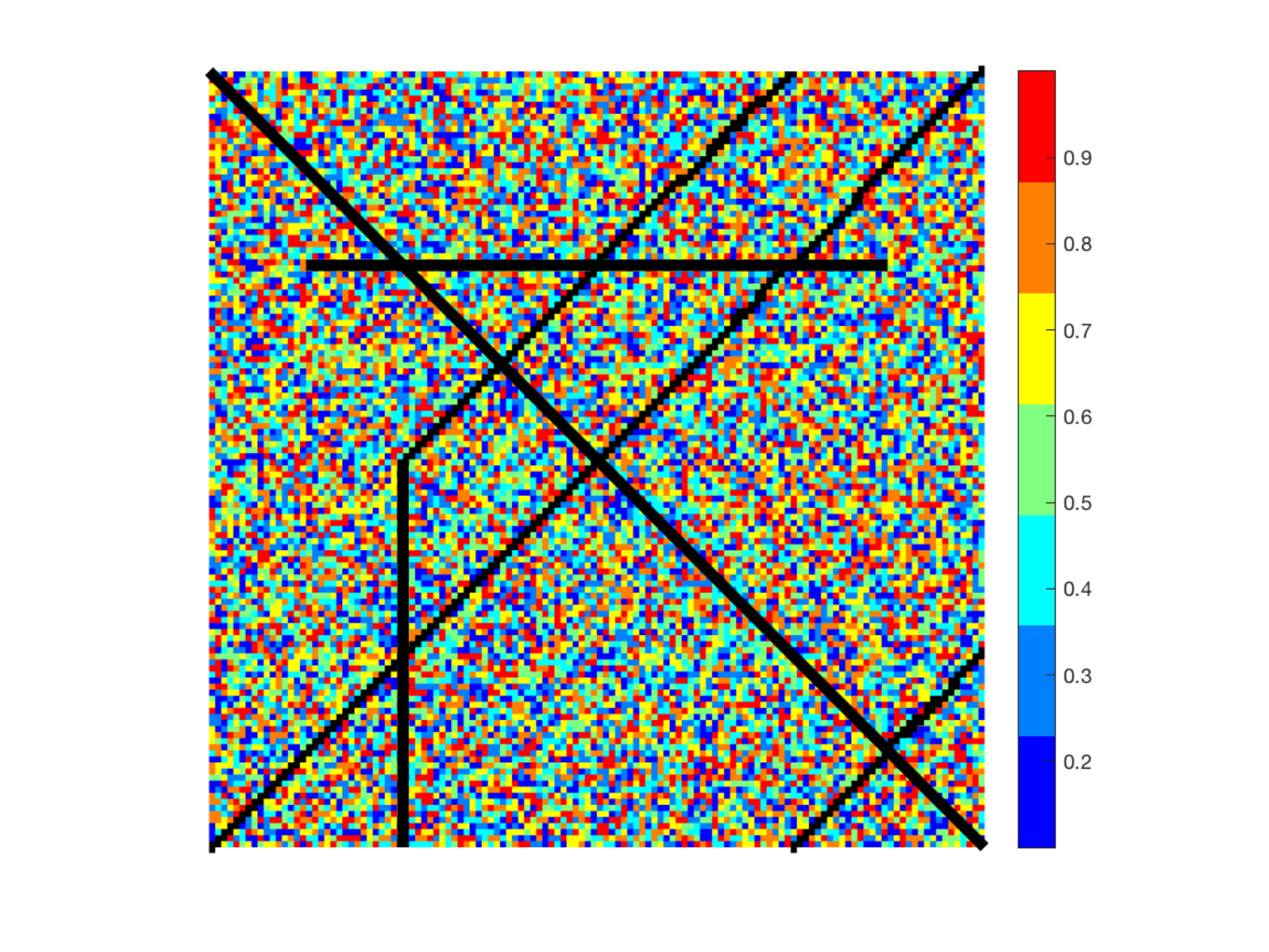}\label{WavesA}}
%\subfloat[][Error.]{\includegraphics[width=0.48\textwidth]{KconvergenceOneEdge.eps}
\subfloat[][Error.]{\includegraphics[width=0.36\textwidth]{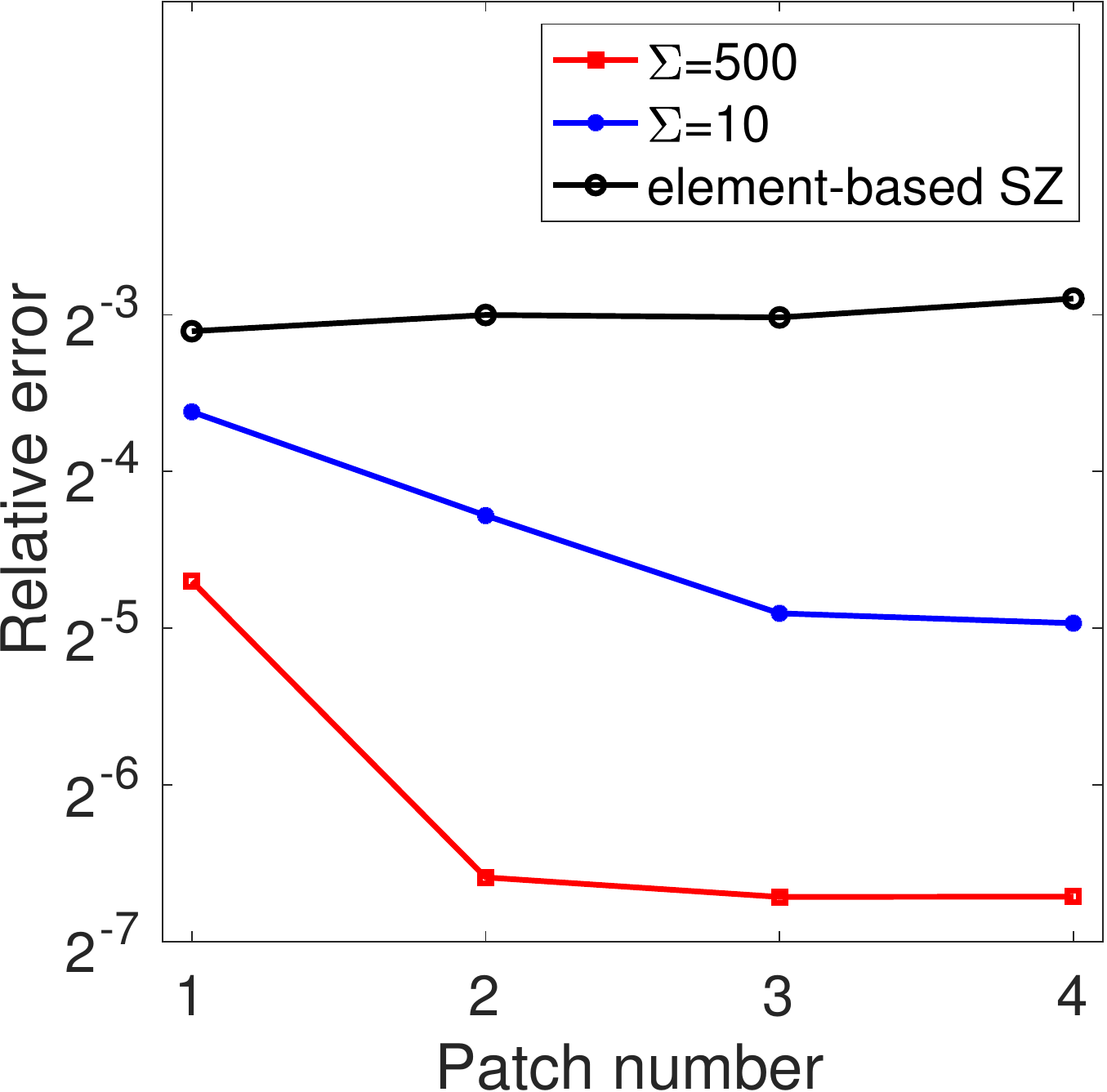}
\label{Kconvergence}}
\caption{}
\end{figure}

We are interested in how the error behaves with respect to the patch size used in the computation of correctors. To this end, we use a coarse mesh with mesh size $H=2^{-5}$, and a fine mesh with mesh size $h=2^{-9}$. The five interfaces are constructed such that they are on the element edges of the mesh with mesh size $2^{-7}$. Therefore, all interfaces are on the element edges of the fine mesh, but some interfaces are not on the element edges of the coarse mesh. %In this case, Operator B in Definition \ref{def_sigma_not_edge} is used as the Scott--Zhang interpolation operator. 

We consider a small threshold value $\Sigma=10$ and a large threshold value 500, where $\Sigma$ is used in \eqref{Sigma1} in the definition of the interpolation operator. As shown in Figure \ref{Mesh_tol10} and \ref{Mesh_tol50000}, the threshold influences the selection of interface nodal variables marked by black circles. With a small $\Sigma$, interface nodal variables are only computed for the nodes on the interfaces. When $\Sigma$ is increased to 500, interface nodal variables are computed on all nodes in the elements that overlap with the interfaces. Note the difference for the nodes not on the coarse edges. 

 %For the Scott--Zhang interpolation operator, we consider a small stability tolerance 10, and a large stability tolerance 500. As shown in Figure \ref{Mesh_10} and \ref{Mesh_500}, the stability tolerance influences the selection of interface nodal variables marked by black circles. With a small tolerance, interface nodal variables are only computed for the nodes on the interfaces. When the stability tolerance is increased to 500, interface nodal variables are computed on all nodes in the elements that overlap with the interfaces.
%

\begin{figure}
\centering
\subfloat[][$\Sigma$ = 10.]{\includegraphics[width=0.35\textwidth]{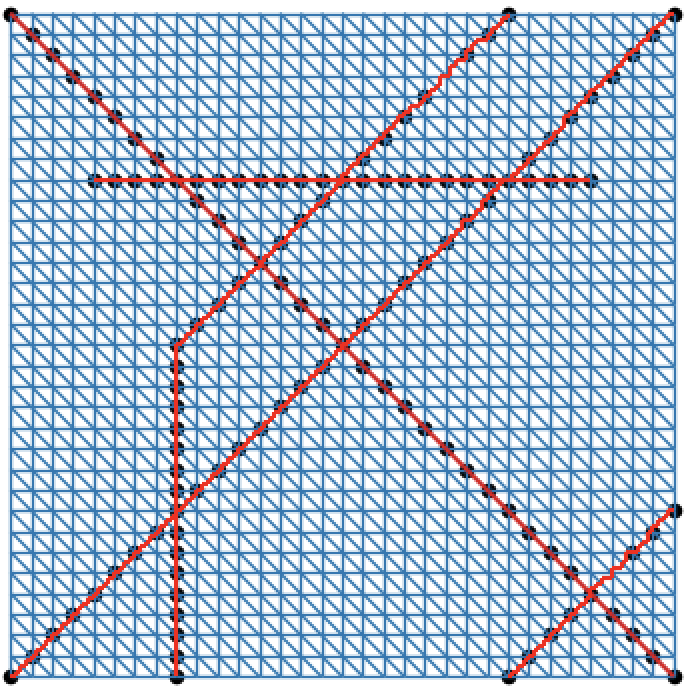}\label{Mesh_tol10}}\qquad\qquad
\subfloat[][$\Sigma$ = 500.]{\includegraphics[width=0.35\textwidth]{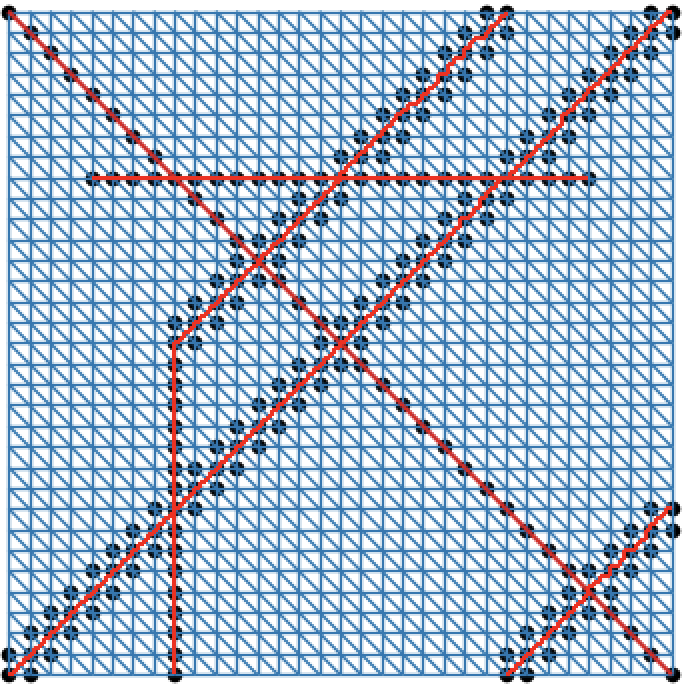}\label{Mesh_tol50000}}
%\subfloat[][Intersected and immersed interfaces]{\includegraphics[width=0.58\textwidth]{MeshMultipleInterfaces.png}\label{MeshMultipleInterfaces}}
\caption{}
\end{figure}

%\begin{figure}
%\centering
%\subfloat[][Tolerance = 10.]{\includegraphics[width=0.48\textwidth]{Mesh_tol10.png}\label{Mesh_tol10}}
%\subfloat[][Tolerance = 50000.]{\includegraphics[width=0.48\textwidth]{Mesh_tol50000.png}\label{Mesh_tol50000}}
%%\subfloat[][Intersected and immersed interfaces]{\includegraphics[width=0.58\textwidth]{MeshMultipleInterfaces.png}\label{MeshMultipleInterfaces}}
%\caption{}
%\end{figure}

%The stability tolerance has an effect on the error in the LOD solution.
In Figure \ref{Kconvergence}, we plot the relative error \eqref{rel_err} versus the patch size. We observe that with the proposed Scott--Zhang type interpolation operator, the error with threshold value  500 is smaller than the error with threshold value 10. This observation suggests to use a large threshold. We also observe that two patches \revv{$k=2$} is adequate to obtain fast decaying multiscale basis functions for this problem. \revv{Since the degree of freedom of a local problem is proportional to $k^2$, a small patch size $k$ leads to a small computational cost.} For two layers and more the discretization error depending on $H$ is dominating. In contrast, with a standard element-based Scott--Zhang interpolation operator, the error does not decay with increased patch size, indicating a lack of decay of the multiscale basis functions.

%We observe that with stability tolerance 500, the error is about five times smaller than the error with stability tolerance 10, and two patches is adequate for this problem. In contrast, the error with the element based Scott--Zhang interpolation operator does not decay with an increased patch size, indicating a lack of decay of correctors.

%
%\begin{figure}
%\centering
%\includegraphics[width=0.6\textwidth]{Kconvergence.eps}\label{Kconvergence}
%\caption{}
%\end{figure}

%We test the proposed LOD method with two different stability tolerances. With a small tolerance 10, the Scott--Zhang interface nodal variables are computed for the nodes that are on the five interfaces. As a consequence, the Scott--Zhang integration domain does not cover all interfaces. This leads to a large error in the solution $u_{LOD}$, as seen in Figure \ref{Kconvergence}. Next, we increase the stability tolerance to 500 so that all nodes in the elements that are cut by the interfaces are included when computing the Scott--Zhang interface nodal variables. We observe that the error becomes much smaller. In addition, patch size 3 is adequate even though the analysis in Section \ref{sec_exp_decay} suggests a patch size $\log(H^{-1})\approx 5$. In contrast, with a standard Scott--Zhang element based interpolation operator, the error stays constant with increased patch size.

\subsection{The wave equation}
\revv{We consider the wave equation with weak form: for each $t>0$ find $u\in V$ such that}
%We numerically test the proposed LOD method for wave propagation problems in fractured porous media, and take the two-dimensional acoustic wave equation in the unit square as the model problem. The weak form reads: for each $t>0$ find $u\in V$ such that
\[
(B \ddot{u}, v)_\Om + (B_\G \ddot{u}, v)_\G = -(A\nabla u,\nabla v)_{\Om}-( A_\G \nabla_{\bs\tau} u, \nabla_{\bs\tau} v)_{\G}+(f,v)_{\Om}+(f_\G,v)_{\G},
\]
for all $v\in V$. The symbol $\ddot{u}$ denotes the second derivative of $u$ in time.

We choose a highly oscillatory wave speed by using the same coefficient $A$ and interfaces as in the previous numerical example, that is, $A$ is sampled from a uniform distribution in $[0.1, 0.9]$ with a variation on the scale $2^{-7}$, and $A_\G=2$. The coefficient $A$ and interfaces are depicted in Figure \ref{WavesA}. The wave propagation starts from rest with homogeneous initial conditions and Dirichlet boundary conditions, and is driven by external forcing. In particular, we choose $f=1$ in the domain $\Omega_0=[0.375, 0.625]^2$, and $f=0$ in $\Omega\backslash\Omega_0$. For the forcing on the interfaces, we use $f_\G=1$ in $\Omega_0\cap\G$, and $f_\G=0$ in $(\Omega\backslash\Omega_0)\cap\G$. With $B=1$ and $B_\G=0.1$, the wave speed in the fractures is higher than in the bulk domain. We note the the data is well-prepared according to Definition 4.5 in \cite{Abdulle2017}.

For spatial approximation, the SFEM is used to compute the reference solution on a fine mesh with mesh size $2^{-9}$. We then use the proposed LOD method for the upscaling of the spatial discretization on coarse meshes with mesh sizes $2^{-3}$, $2^{-4}$, $2^{-5}$, $2^{-6}$. The solution is integrated in time by the Crank--Nicolson method. \revv{We note that explicit time integrator such as the leap-frog method can also be used  \cite{Abdulle2017,Maier2019}.}

The reference solution at $t=0.1$ is shown in Figure \ref{RefWave0}. We observe that the wave propagates faster in the interfaces than in the bulk domain. The relative error of the LOD solution, shown in Figure \ref{WavesError0}, gives a first order convergence rate.
%a(u,v) &= (A\nabla u,\nabla v)_{\Om}+( A_\G \nabla_{\bs\tau} u, \nabla_{\bs\tau} v)_{\G}, \label{scalar_a}\\
%F(v) &= (f,v)_{\Om}+(f_\G,v)_{\G}. \label{Fv}

%\begin{equation}\label{wave_eqn}
%\begin{split}
%u_{tt} &= \nabla \cdot A\nabla u +F,\quad (x,y)\in \Omega=[0,1]^2,\\
%u(x,t) &= 0,\quad (x,y)\in\partial\Omega, \\
%u(x,0)&=u_t(x,0)=0,\quad (x,y)\in\Omega.
%\end{split}
%\end{equation}
%In particular, we consider homogeneous initial and boundary conditions, and a Gaussian
%\[
%F(x) = \frac{1}{\sqrt{2\pi\sigma^2}}e^{-\frac{(x-0.5)^2+(y-0.5)^2}{2\sigma^2}},
%\]
%with $\sigma=0.05$ as the forcing function. The small value of $\sigma$ makes the forcing function $F$ have a large $H^1$ norm, and difficult to resolve.  We note that the data is well-prepared according to Definition 4.5 in \cite{Abdulle2017}.

%The coefficient $A$ is a random field with a variation on the scale of $2^{-7}$, see the plot in Figure \ref{WavesA}. The interfaces are the same as in the previous experiment. We use the proposed LOD method for the upscaling of the spatial discretization of \eqref{wave_eqn}, and use the Crank-Nicolson method for the time integration.

%We choose the final time $T=1$, and compute the reference solution on a fine mesh with mesh size $h=2^{-9}$, which is shown in Figure \ref{RefWave}.

\begin{figure}
\subfloat[][The solution of the wave equation at $T=0.1$.]{\includegraphics[width=0.58\textwidth]{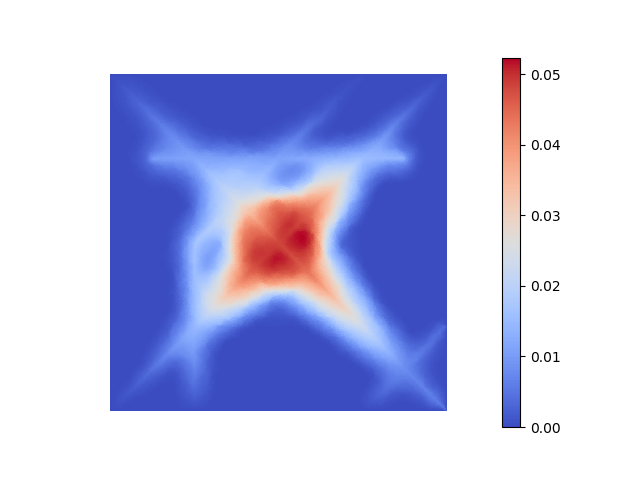}\label{RefWave0}}
\subfloat[][Error at $t=0.1$.]{\includegraphics[width=0.4\textwidth]{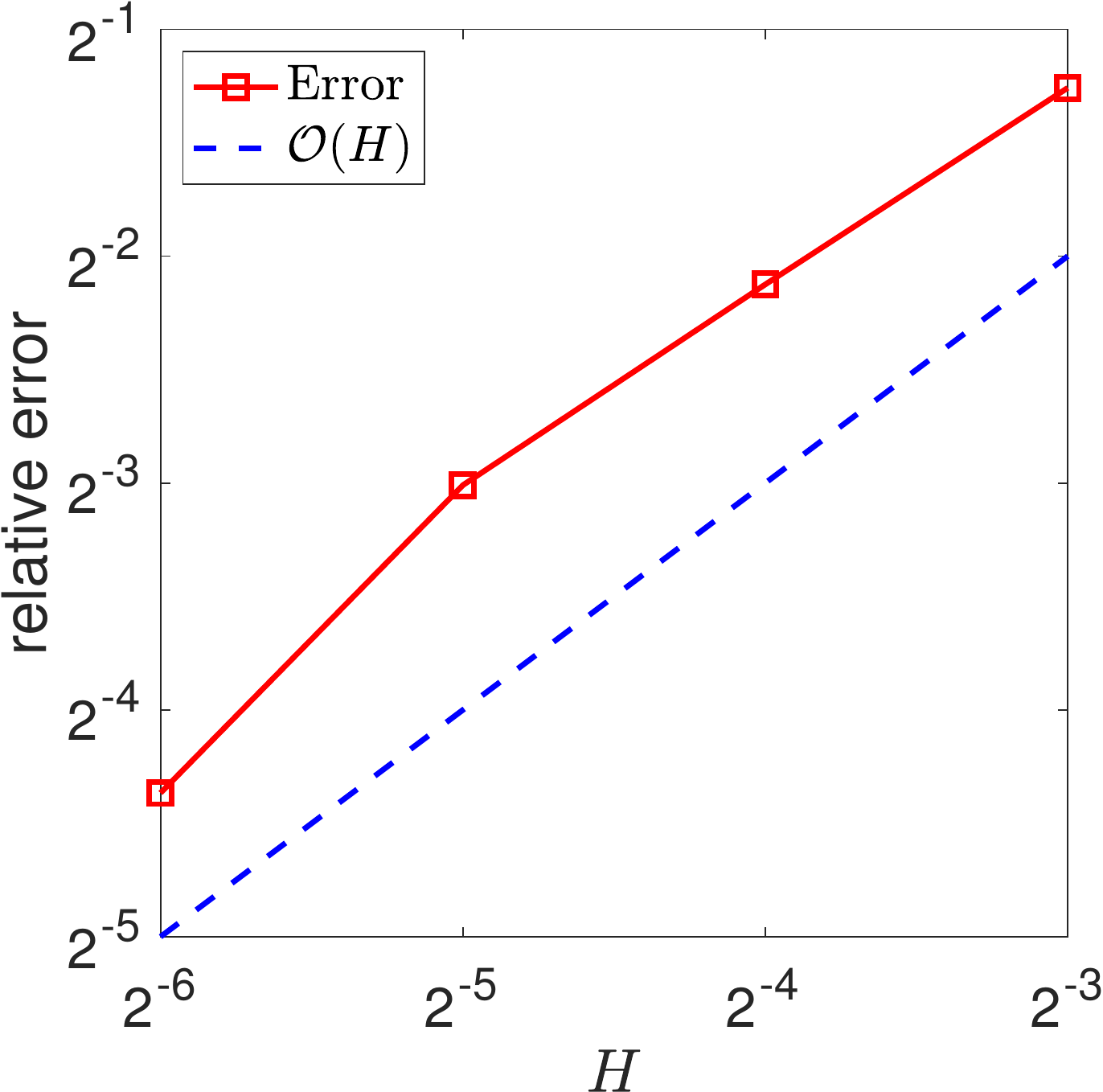}\label{WavesError0}}
\caption{}
\end{figure}

In Figure \ref{RefWave}, we show the reference solution at $t=1$, when the wave has interacted with the outer boundary. The relative error in the LOD solution has the same behavior, and converges at first order, see Figure \ref{WavesError}. This experiment demonstrates that the proposed LOD method works well for the upscaling of the spatial discretization for the acoustic wave equation.

\begin{figure}
\subfloat[][The solution of the wave equation at $T=1$.]{\includegraphics[width=0.58\textwidth]{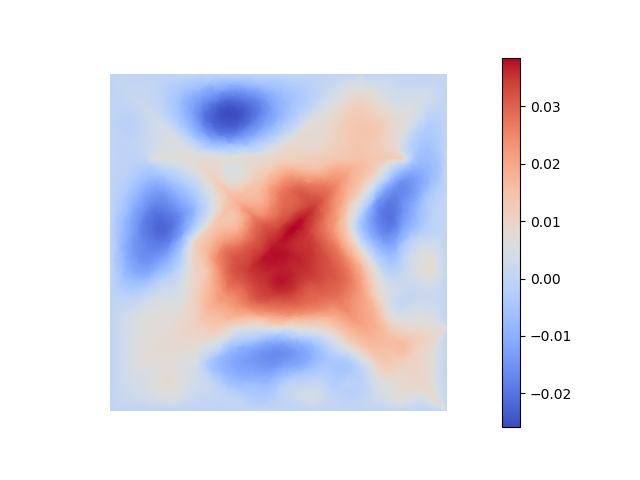}\label{RefWave}}
\subfloat[][Error at $t=1$.]{\includegraphics[width=0.4\textwidth]{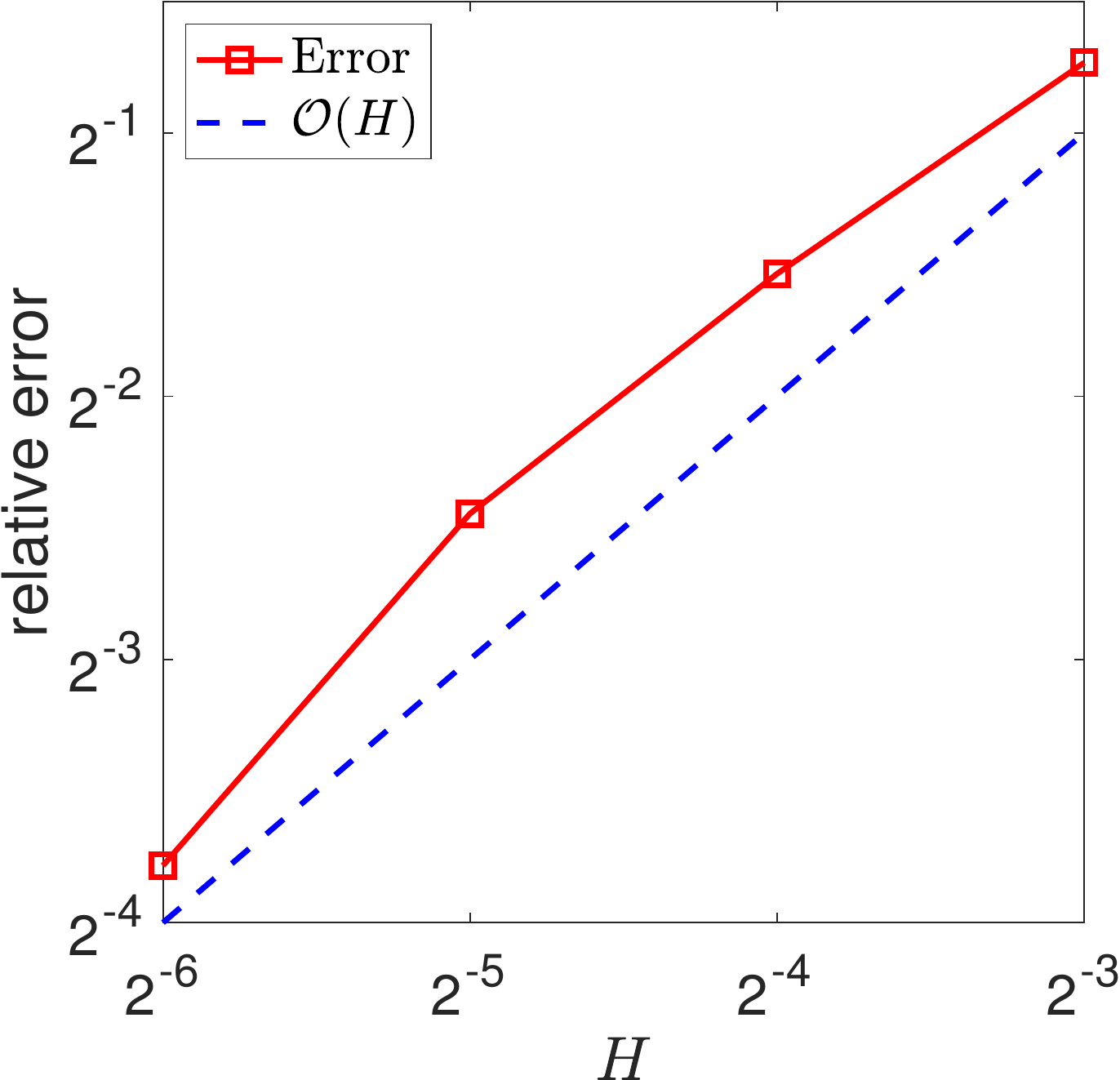}\label{WavesError}}
\caption{}
\end{figure}

The computation of the basis spanning the localized multiscale space require solution of $\mathcal{O}(H^{-2})$  local problems \eqref{Qdef}.  The computational cost of each local problem is $\mathcal{O}((k^2(H/h)^2)^{s})$, where $s\geq 1$ gives the complexity $N^s$ of solving a linear system with $N$ unknowns and depends on the method used, and $k$ is the patch size. Consequently, the offline computational cost of the LOD method is $\mathcal{O}(k^{2s} H^{2s-2} h^{-2s})$. Let $m$ be the number of time steps, then the total computational cost of the LOD method for the wave equation is $C_1 k^{2s} H^{2s-2} h^{-2s}+C_2mH^{-2s}$. The computational cost of the standard finite element method is $C_3mh^{-2s}$, which is much higher than the LOD cost if $m$ is large and $h$ is small. In addition, the offline computational cost in the LOD method, $C_1 k^{2s} H^{2s-2} h^{-2s}$, can be reduced straightforwardly by solving the local problems on a parallel machine, which further improves the computational efficiency of the LOD method.

%\section{Conclusion and future work}
%We have developed an LOD method for an asymptotic model for Darcy flows in fractured porous media.  The basis correctors for the multiscale space exhibit exponential decay with a carefully chosen integration domain of the Scott--Zhang interpolation operator. In the case when the interfaces are on the edges of the coarse mesh, we have given a theoretical proof of the decay rate, and derived a priori error bound for the multiscale solution. Numerical experiments verify the theoretical analysis.
%
%When the interfaces are not on the element edges in the coarse mesh, we ......
%
%We have also numerically tested the proposed LOD method for the upscaling of the spatial discretization of the wave equation. Our preliminary result shows that optimal convergence rate is obtained with a coarse mesh that does not resolve the variation in the material property. In the future work, we will investigate the suboptimal convergence rate on very coarse meshes, and derive an upscaling technique to well capture the phase velocity for long time wave propagation problems.

\bibliography{C:/Users/swg02/Research/Wang/siyang_references/Siyang_References}
\bibliographystyle{spmpsci}

\end{document}